\newcommand{\arr}[0]{{\mathcal A}}
\newcommand\al{\alpha}
\newcommand\tal{\tilde{\alpha}}
\newcommand{\A}[0]{\mathcal{A}}
\newcommand{\MA}[0]{\mathcal M(\A)}
\renewcommand{\L}[0]{\mathcal{L}}
\newcommand{\Z}[0]{\mathbb{Z}}
\newcommand{\R}[0]{\mathbb{R}}
\newcommand{\C}[0]{\mathbb{C}}
\newcommand{\St}[0]{\mathcal S}
\newcommand{\p}[0]{p}
\def\qed{\ifmmode $\Box$ \else{\unskip\nobreak\hfil
\penalty50\hskip1em\null\nobreak\hfil $\Box$
\parfillskip=0pt\finalhyphendemerits=0\endgraf}\fi}
\newcommand{\eq}[1][r]
       {\ar@<-3pt>@{->}[#1]
        \ar@<-1pt>@{}[#1]|<{}="gauche"
        \ar@<+0pt>@{}[#1]|-{}="milieu"
        \ar@<+1pt>@{}[#1]|>{}="droite"
        \ar@/^2pt/@{-}"gauche";"milieu"
        \ar@/_2pt/@{-}"milieu";"droite"}
\newcommand{\imm}[1][r] {\ar@{^{(}->}[#1]}
\newtheorem{df}{Definition}[section]
\newtheorem{teo}[df]{Theorem}
\newtheorem{lem}[df]{Lemma}
\newtheorem{cor}[df]{Corollary}
\newtheorem{rmk}[df]{Remark}
\newtheorem{rem}[df]{Remark}
\begin{document}

\hyphenation{mul-ti-pli-ci-ty ho-mo-lo-gy}

\title[Some computations on the characteristic variety of a line arrangement]{Some computations on the characteristic variety of a line arrangement}

\author[O. Papini and M. Salvetti]{ O. Papini}
\address{Department of Mathematics,
University of Pisa, Pisa Italy}
\email{papini@student.dm.unipi.it}

\author[]{ M. Salvetti}
\address{Department of Mathematics,
University of Pisa, Pisa Italy}
\email{salvetti@dm.unipi.it}
\thanks{Partially supported by: progetto PRA "Geometria e Topologia delle Variet\`a" (2018), Universita' di Pisa; INdAM; MIUR}



\maketitle

\begin{abstract} We find monodromy formulas for line arrangements which are fibered with respect to the  projection from one point. We use them to find $0$-dimensional translated components in the first characteristic variety of the arrangement $\mathcal R(2n)$ determined by a regular $n$-polygon and its diagonals.

\end{abstract}

\section{Introduction} An abelian local system $\L_{\rho}$ on the complement $\MA$ of a hyperplane arrangement $\A$ in $\C^N$ is defined by choosing  one non-zero complex number $\rho_{\ell}$ for each hyperplane $\ell\in \A.$ For a generic choice of the parameters $\rho_{\ell},$ it is well known that that homology concentrates in the top dimension $N$ (see for ex. \cite{kohno}).  The \emph{$i$-th characteristic variety} of $\A$ is the subvariety 
$$V_i(\A)=\{(\rho_\ell)_{\ell\in\A}\in (\C^*)^r:\ dim(H_i(\MA;\L_{\rho}))>0\}$$
where $r=\#\A.$ 
 A large literature on this subject is known, in connection also with the theory of\emph{ resonance varieties} and the study of the cohomology of the associated Milnor fiber
(see for example \cite{dimpapsuc}, \cite{dimpapa}, \cite{papamaci}, \cite{papadima1}, \cite{arapura}, \cite{suciu}, \cite{suciu2}, \cite{YoBand2}, \cite{YoBand3}, \cite{libgober},\cite{libyuz}, \cite{falk}, \cite{falkyuz}, \cite{denhamsuciu}, \cite{cohensuciu},\cite{cohenorlik}, \cite{gaiffi_salvetti}, \cite{SSv1},\cite{callegaro:classical}).

Probably the main problem is to understand if the characteristic varieties are combinatorially determined (see for example \cite{libgober1}).
This is known to be true for their "homogeneous part", which corresponds to  the resonance  variety by the tangent cone theorem (see for ex. \cite{dimcabook} for general references). Nevertheless, the translated components of the characteristic variety are still not well understood; in particular, the geometric description which is known for the translated components of dimension at least one does not work in the same way for the $0$-dimensional translated components (see also \cite{artal}). 

For these reasons we think that it can be useful to produce more examples (interesting in themselves) such that the characteristic variety has some  translated $0$-dimensional global component. 
\smallskip

In this paper we outline a different approach, based on an (apparently new) elementary description of the characteristic variety.  We consider here  the case $N=2.$  We find  that the arrangement $\mathcal R(2n)$ determined by a regular $n$-polygon and its diagonals produces the above mentioned  phenomenon for $n\geq 5,$ namely we find $\phi(n)$ translated $0$-dimensional components in its characteristic variety (here $\phi$ is the Euler function). This fact was experimentally observed in \cite{papini} by using computer methods, up to $n=7.$  
 
The arrangement $\mathcal R(2n)$ belongs to the class of \emph{fibered} arrangements: the projection through its ''center'' gives a fibration of the complement $\MA$ over $B=\C \setminus \{n-1\ points\}$ with fiber $F=\C\setminus \{n\ points\}.$     

We obtain the above result by using algebraic complexes which compute the parallel transport and the monodromy of the first homology group of the fiber, which we think are interesting in themselves.   For example, we deduce  a restriction on global components of the characteristic variety (thm \ref{restriction}) and a description of $V(\A)\ (=V_1(\A))$ as the set of points such that the transpose of the monodromy operators have a common eigenvector  (thm \ref{charmono}).

For the arrangement $\mathcal R(2n),$ we use the algebraic complexes constructed in section \ref{sec2} to make explicit computations. If $\omega_n$ is an $n$-th primitive root of $1,$ we  find that 
assigning to the edges of the polygon the parameter $(\omega_n)^k$ and to the diagonals the parameter $(\omega_n)^{k(n-2)}$, $k=1,\dots,n-1,$ lowers the rank of the boundary operator in the algebraic complex computing $H_0(B;H_1(F;\L_{|F})),$ therefore the corresponding point $P_{n,k}$ belongs to $V(\A).$ In case $k$ and $n$ are coprime, we deduce that $P_{n,k}$ is an isolated point in the characteristic variety.  Our argument works for $n\geq 5$ and uses the description (see thm \ref{charmono}) of the characteristic variety and the remark that triple points give  simple eigenvalues for the corresponding monodromy operators. We find  that the shape of the common eigenvector imposes equalities among the eigenvalues of the monodromy operators, which define a zero-dimensional locus.

 When $n$ and $k$ are not coprime, our argument can also be used. In this case the shape of the common eigenvector imposes less equalities (so, a higher dimensional locus in general). For example, we find in case \  $n=4m$ \ an explicit  $1$-dimensional translated component containing $P_{n,m}$ and $P_{n,3m}$ (thm \ref{onedimensional}). When  $n=4,$ this gives another description of the $1$-dimensional translated component found in \cite{suciu}. For $n=8,$ the translated $1$-dimensional component contains the two points $P_{8,2},\ P_{8,6}$  (in contrast with \cite{toryos}, p, 45--46;
 see remark \ref{nonprimo}).
 
We also remark that our approach can be generalized (with some modifications) to not necessarily fibered arrangements (we will return to this in future work).

\section{Algebraic complexes}\label{sec2}   In this section we produce an algebraic complex computing local homology for a line arrangement. This complex seems particularly convenient in case of a fibered arrangement, in which case it is smaller than the one in \cite{gaiffi_salvetti}, \cite{SS}.

Let $\A$ be an affine line arrangement; we assume that $\A$ is defined over $\R$ because formulas are easier to write, but all we say can be generalized to general complex arrangements.  Let $\MA$ be its complement in $\C^2=\{(x,y):\ x,y\in\C\};$ let $\pi:\C^2\to\C$ be the projection $\pi(x,y)=x$ onto the $x$-axis and $\pi'=\pi_{|\MA}:\MA\to \C$ its restriction.  Let $\St\subset \A$ be the set of singular points of the arrangement, $\pi(\St)\subset \C$ its projection and $B=\C\setminus \pi(\St).$ Then 
\begin{equation}\label{bundle} \pi''=\pi'_{| (\pi')^{-1}(B)}:\  (\pi')^{-1}(B) \to B\end{equation}
is a fiber bundle with fibers $F_x:=(\pi')^{-1}(x),\ x\in B.$ We divide the arrangement 
$$\A=\{\ell_1,\dots\ell_n,\ell'_1,\dots,\ell'_m\}$$
into  \emph{horizontal} and  \emph{vertical} lines,  meaning that $\ell'_j=\pi^{-1}(\xi)$ for some $\xi\in \pi(S),$ while $\pi(\ell_i)=\C.$ Notice that if $\forall \xi\in \pi(S)$ the vertical line $\pi^{-1}(\xi)$ is in $\A,$ then $\MA=(\pi')^{-1}(B)$ so all the complement $\MA$ is a fiber bundle over $B.$  Let $\C_x=\pi^{-1}(x),\ x\in B;$ then $F_x= \C_x\setminus \{y_1,\dots,y_n\}$ where  $y_i= \C_x\cap \ell_i ,$ $i=1,\dots,n.$        

We fix a fiber $F_0=(\pi'')^{-1}(x_0),\ x_0\in B,$ $x_0\in\R;$ take a basepoint $P_0\equiv(x_0,y_0)\in F_0$ with $\Im (y_0)>\!\!>0,$ and elementary well-ordered generators $\al_1,\dots\al_n$ of $\pi_1(F_0),$ where $\al_i$ is constructed by using a path connecting $P_0$ to a small circle around $\ell_i\cap F_0.$ The indices of the lines are taken according to the growing intersections with the real axis $\Re(\C_{x_0}).$ 

We can make the same construction for any fiber $F_x,\ x\in B, \ x\in\R,$ 
by using the basepoint $P_0^{(x)}\equiv(x,y_0)$ and generators $\al^{(x)}_i;$ the indices here are locally computed according to the growing intersections of the lines with $\Re(\C_{x}).$  

We are interested in the local homology of $\MA,$ where we consider the abelian local system defined by taking $\C$ as an  $H_1(\MA;\Z)$-module: the action is given by taking standard generators  (small circles around the hyperplanes) into multiplication by non-zero numbers. So, such local systems correspond to $(n+m)$-tuples of parameters in $(\C^*)^{n+m}.$  

We call $s_1,\dots, s_n$ the parameters which correspond to the lines $\ell_1,\dots,\ell_n$ and $t_1,\dots,t_m$ those corresponding to the lines $\ell'_1,\dots,\ell'_m$ respectively. Denote by $\L=\L_{s,t}$ the corresponding local system. We also set $\L_s=\L_{|_{F_0}}.$  

In this paper we will assume that $\MA$ fibers over $B;$  that corresponds, as said before, to the case where the singular set $S$ is contained into the union of the vertical lines. 
\bigskip

\begin{lem} We have
\begin{equation}\label{decompo} H_1(\MA;\L_{s,t}) \ =\ H_0(B;H_1(F_0;\L_s))\oplus H_1(B;H_0(F_0;\L_s)) \end{equation}
where the second factor in the right member vanishes if $\L_s$ is non trivial (i.e., some $s_i\not= 1$).
\end{lem}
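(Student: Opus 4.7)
The plan is to apply the Leray--Serre spectral sequence of the fiber bundle $\pi'':\MA\to B$ from (\ref{bundle}) (which, under the standing fibered assumption, has total space all of $\MA$), with coefficients in the local system $\L_{s,t}$. Since the vertical lines $\ell'_j$ miss every fiber $F_x$ for $x\in B$, the restriction of $\L_{s,t}$ to a fiber picks up only the parameters $s_i$ and coincides with $\L_s$. The $E_2$-page is therefore
\[
E_2^{p,q} \ =\ H_p\bigl(B;\, H_q(F_0;\L_s)\bigr),
\]
where $H_q(F_0;\L_s)$ is regarded as a local system on $B$ through the geometric monodromy of the bundle.

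Both $B=\C\setminus\pi(\St)$ and $F_0=\C\setminus\{y_1,\dots,y_n\}$ are complements of finitely many points in $\C$, hence homotopy equivalent to wedges of circles and of cohomological dimension $1$. Thus $E_2^{p,q}=0$ unless $(p,q)\in\{0,1\}^2$, and the higher differentials touching positions $(1,0)$ and $(0,1)$ vanish for dimensional reasons (either source or target is $0$). It follows that $E_\infty^{1,0}=E_2^{1,0}$ and $E_\infty^{0,1}=E_2^{0,1}$, yielding a short exact sequence
\[
0\ \to\ H_1(B;H_0(F_0;\L_s))\ \to\ H_1(\MA;\L_{s,t})\ \to\ H_0(B;H_1(F_0;\L_s))\ \to\ 0.
\]
Being an extension of $\C$-vector spaces it splits, producing the decomposition (\ref{decompo}).

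For the vanishing, $H_0(F_0;\L_s)$ is the space of coinvariants of the $\pi_1(F_0)$-action on the one-dimensional representation $\C$; with the elementary generators $\al_1,\dots,\al_n$ acting by multiplication by $s_1,\dots,s_n$, this equals $\C\big/\sum_i(1-s_i)\C$, which is $0$ as soon as some $s_i\neq 1$. Hence $H_1(B;H_0(F_0;\L_s))=0$ whenever $\L_s$ is non-trivial.

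The only subtle point in the plan is checking that the coefficient system appearing on $E_2$ genuinely coincides with $H_*(F_0;\L_s)$ equipped with the monodromy of $\pi''$; this is routine using the explicit trivializations built from the basepoints $P_0^{(x)}$ and generators $\al^{(x)}_i$ of Section \ref{sec2}, and in particular confirms that the parameters $t_j$ do not enter the monodromy representation acting on the fiber homology.
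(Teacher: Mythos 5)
Your spectral-sequence argument is the same one the paper invokes (its own proof is a one-line appeal to ``the associated spectral sequence and the definition of the $0$-th local homology group''), and the body of your write-up --- degeneration at $E_2$ because both $B$ and $F_0$ are homotopy equivalent to wedges of circles, the splitting of the resulting short exact sequence of vector spaces, and the identification of $H_0(F_0;\L_s)$ with the coinvariants $\C/\sum_i(1-s_i)\C$ --- is correct and fills in exactly what the paper leaves implicit.

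The one genuine error is your closing remark that the parameters $t_j$ ``do not enter the monodromy representation acting on the fiber homology.'' They must, and they do. The coefficient system $\mathcal{H}_q$ on $B$ appearing in $E_2^{p,q}=H_p(B;\mathcal{H}_q)$ is the fiberwise homology $H_q(F_x;\L_x)$ equipped with the parallel transport of the \emph{local system} $\L_{s,t}$, not merely the geometric monodromy of the bundle $\pi''$: a loop in $\MA$ lying over the elementary generator $\gamma_p$ of $\pi_1(B)$ has linking number $1$ with the vertical line $\C_p$, so the induced automorphism of $H_1(F_0;\L_s)$ is $t_p\,\mu([\gamma_p])$ rather than $\mu([\gamma_p])$ --- this is precisely the factor $t_p$ appearing in formula (\ref{delta1}) of Theorem \ref{fundam}. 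If the $t_j$ really dropped out, the right-hand side of (\ref{decompo}) would be independent of $t$ as soon as some $s_i\neq 1$, whereas the left-hand side $H_1(\MA;\L_{s,t})$ certainly is not (the translated components found later impose conditions such as $t_p s_i s_j=1$). So the correct resolution of your ``subtle point'' is the opposite of what you wrote: the $\pi_1(B)$-module structure on $H_1(F_0;\L_s)$ implicit in (\ref{decompo}) is the twisted one, with the $t_p$ built in. This does not affect the validity of the vector-space decomposition itself, only the meaning of its first summand.
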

\begin{proof} It immediately derives from the associated spectral sequence and from the definition of the $0$-th local homology group. \end{proof}
\bigskip

So we have to compute the term $H_0(B;H_1(F_0;\L_s)),$ where $H_1(F_0;\L_s)$ is seen as a $\pi_1(B)$-module (actually, an $H_1(B;\Z)$-module), the action being given by the monodromy of the bundle. 
\bigskip

\begin{lem} One has   $dim(H_1(F_0;\L_s))= n-1$  if $\L_s$ is non-trivial. As generators we can take the classes of 
\begin{equation}\label{generat0} \tal_{i,i+1}\ =\ (1-s_{i+1})\al_i-(1-s_i)\al_{i+1}, \quad i=1,\dots, n-1. \end{equation}
\end{lem}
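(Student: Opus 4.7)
The plan is to present $F_0$ as a CW complex with a single $0$-cell and $n$ loops $\al_1,\dots,\al_n$ (it is homotopy equivalent to a bouquet of $n$ circles since $F_0=\C\setminus\{y_1,\dots,y_n\}$), and compute $H_*(F_0;\L_s)$ from the associated cellular chain complex with local coefficients. Because $\pi_1(F_0)$ is free on $\al_1,\dots,\al_n$ acting on the fiber $\C$ via $\al_i\cdot 1 = s_i$, this complex collapses to
\[
0 \;\longrightarrow\; \C^n \;\xrightarrow{\;\partial\;}\; \C \;\longrightarrow\; 0,
\qquad \partial(\al_i)=s_i-1.
\]

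\noindent For the dimension statement I would first note that non-triviality of $\L_s$ says some $s_{i_0}\neq 1$, whence $\partial$ is surjective; rank--nullity then yields $\dim H_1(F_0;\L_s)=\dim\ker\partial = n-1$ (and incidentally $H_0(F_0;\L_s)=0$, giving the vanishing needed for the previous lemma).

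\noindent Next I would verify directly that each $\tal_{i,i+1}$ is a cycle: writing $a=s_i-1$, $b=s_{i+1}-1$,
\[
\partial(\tal_{i,i+1})=(1-s_{i+1})(s_i-1)-(1-s_i)(s_{i+1}-1)= -ab+ab=0,
\]
so the classes live in $H_1(F_0;\L_s)$. To show they generate, I would reduce to the \emph{generic} case (all $s_i\neq 1$) as follows: rescale $\al_i$ by the unit $(1-s_i)^{-1}$ to obtain vectors $\w_i:=\al_i/(1-s_i)$ all satisfying $\partial(\w_i)=-1$; then
\[
\w_i-\w_{i+1}\;=\;\frac{1}{(1-s_i)(1-s_{i+1})}\,\tal_{i,i+1},
\]
so the $\tal_{i,i+1}$ are (up to nonzero scalars) the consecutive differences of $\w_1,\dots,\w_n$. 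These $n-1$ differences are visibly linearly independent (the coordinate of $\w_i-\w_{i+1}$ in position $i$ is nonzero while positions $<i$ vanish in this and all later differences), and they obviously lie in $\ker\partial$; since $\dim\ker\partial=n-1$ they form a basis, proving the generation claim.

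\noindent The main obstacle is the degenerate situation where some $s_i=1$ (but $\L_s$ is still non-trivial): then the above rescaling breaks down and one checks that $\tal_{i-1,i}$ and $\tal_{i,i+1}$ become proportional. One handles this by reading the proof modulo such vanishing factors --- passing, if necessary, to the enlarged family $\tal_{i,j}=(1-s_j)\al_i-(1-s_i)\al_j$ for $i<j$, which still consists of cycles, and extracting an $(n-1)$-element subfamily of maximal rank by a Vandermonde-style minor argument on the matrix with rows $\tal_{i,j}$. Since the applications in the remainder of the paper deal with local systems where the relevant $s_i$ are primitive roots of unity (hence $\neq 1$), the generic argument suffices for the intended use.
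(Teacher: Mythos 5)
Your proof is correct and follows essentially the same route as the paper, which simply observes that $F_0$ deformation retracts onto a wedge of $n$ circles and invokes the standard cellular computation of local-system homology; you have merely made the boundary map $\partial(\al_i)=s_i-1$, the surjectivity argument, and the linear independence of the consecutive differences explicit. Your closing remark about the degenerate case (two or more $s_i=1$, where the stated generators genuinely fail to span $\ker\partial$) is a real subtlety that the paper glosses over, and your observation that the subsequent use of the lemma only requires all $s_i\neq 1$ is the right way to dispose of it.
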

\begin{proof} The fiber $F_0$ deformation retracts onto a wedge of $n$ $1$-spheres, given by $\cup_{i=1}^n\ \al_i.$ Therefore by standard methods to compute the homology we are done.\end{proof}
\bigskip

To compute the monodromy on $H_1,$ we need to understand the parallel transport 

\begin{equation}\label{transp} \gamma(x,x')_*:H_1(F_x;\L_x)\to H_1(F_{x'};\L_{x'})\end{equation}
\bigskip

\noindent where $\gamma(x,x')$ is a path in $B$ connecting $x$ with $x'$ and $\L_x=\L_{| F_x}.$ We compute the parallel transport for any points $x, x' \in \R\setminus\pi(S),$ where $\R$ is the real axis of the first coordinate (recall that we are assuming $\pi(S)\subset \R$).
We denote by $\tal^{(x)}_{i,i+1}, \ i=1,\dots,n-1,$ the generators of $H_1(F_x;\L_x),$ constructed as those in (\ref{generat0}), by using the $\al^{(x)}_i$'s. 
Formulas look better if we take as generators 

\begin{equation}\label{generat} \al_{i,i+1}= \frac{\tal_{i,i+1}}{(1-s_i)(1-s_{i+1})}= {\frac{\al_i} {1-s_i}} - {\frac{\al_{i+1}}{1-s_{i+1}}}.\end{equation}
\bigskip

\noindent Of course, this requires each $s_i\not=1,$ but since we are interested in \emph{global components} of the characteristic variety (i.e., not contained in any coordinate tori $s_i=1$), this is not a serious restriction (general formulas for the $\tal_{i,i+1}$ are similar).

Given $x, x'\in\R\setminus\pi(S),$  we consider a path $\gamma(t)=\gamma(x,x')(t),\ t\in[0,1],$ in $B$ connecting $x, x',$ such that $\gamma \cap \R =\{x,x'\}$ and $\Im(\gamma(t))(x-x')\geq 0,\ t\in[0,1]$  (i.e., $\gamma$ leaves the real axis on the left while traveling from $x$ to $x'$). We denote by $\tau(x,x')$ the corresponding transport isomorphism (\ref{transp}). 

Let $\sigma=\sigma(x,x')$ be the permutation of the indices $1,\dots,n$ which is obtained as follows: the $i$-th line in the $x$-ordering (considering the growing intersections with $\Re(\C_x)$) is the $\sigma(i)$-th line in the $x'$-ordering.  For each $i=1,\dots,n-1$ we set
$\sigma(i,i+1)=+1$ or $-1$ depending on whether $\sigma(i+1)>\sigma(i)$ or $\sigma(i+1)< \sigma(i).$  
\bigskip

\begin{teo}{ \emph{[parallel transport]} We have 
\begin{equation}\label{transport} \tau(x,x')(\al^{(x)}_{i,i+1})\ =\ 
\sigma(i,i+1) \sum_{j=m(i,i+1)}^{M(i,i+1)-1}\ (\prod_{ \substack {k>i \\ \sigma(k)\leq j}} \ s_k)\ \al^{(x')}_{j,j+1}.\end{equation}
Here $m(i,i+1)=min(\sigma(i),\sigma(i+1)),$ and $M(i,i+1)=max(\sigma(i),\sigma(i+1)).$}
\end{teo}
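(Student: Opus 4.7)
The plan is to prove (\ref{transport}) by reducing, via functoriality of parallel transport, to the case of an elementary adjacent swap, and then verifying the elementary case by translating the classical Artin braid action on $\pi_1(F_x)$ to an action on the twisted $1$-chain complex via Fox calculus. First, the functoriality $\tau(x,x'')=\tau(x',x'')\circ\tau(x,x')$ lets one subdivide $\gamma$ into pieces each passing by at most one point of $\pi(S)$, and a small perturbation resolving each multiple singular point into a cluster of double points reduces the problem to the case $\sigma=(i_0,i_0+1)$, a single adjacent transposition.

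Second, in this elementary case the local monodromy along $\gamma$ is a half-twist exchanging $y^{(x)}_{i_0}$ and $y^{(x)}_{i_0+1}$. Depending on the relative slopes of $\ell_{i_0}$ and $\ell_{i_0+1}$ and the side of $\R$ on which $\gamma$ travels, this is a positive or negative Artin braid generator, acting on $\pi_1(F_x)$ by $\al_{i_0}\mapsto \al_{i_0}\al_{i_0+1}\al_{i_0}^{-1}$, $\al_{i_0+1}\mapsto \al_{i_0}$, and fixing the other generators. Translating this into the twisted chain complex via the rules $uv\mapsto u+s_u\cdot v$ and $u^{-1}\mapsto -s_u^{-1}u$ (remembering that the positional label at $x'$ is $\sigma$-shifted, so that for example $\al^{(x')}_{i_0}$ has monodromy $s_{i_0+1}$) and then substituting into (\ref{generat}), one obtains expressions for $\tau(\al^{(x)}_{j,j+1})$ in the basis $\{\al^{(x')}_{j,j+1}\}$ which match (\ref{transport}) for $\sigma=(i_0,i_0+1)$: only the three generators $\al^{(x)}_{i_0-1,i_0}$, $\al^{(x)}_{i_0,i_0+1}$, $\al^{(x)}_{i_0+1,i_0+2}$ are affected, and the expected sign $\sigma(i,i+1)=-1$ appears exactly on the middle one.

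Third, a general $\sigma$ is a product of such elementary transpositions (one for each singular fiber that $\gamma$ crosses), and (\ref{transport}) then follows by induction on the length of $\sigma$, composing the elementary transport matrices. The main obstacle will be the combinatorial compatibility in this induction, since the coefficient $\prod_{k>i,\sigma(k)\le j}s_k$ is not manifestly multiplicative under composition of braid generators, so one has to track carefully how the index sets $\{k>i:\sigma(k)\le j\}$ and the sign $\sigma(i,i+1)$ evolve when $\sigma$ is post-composed by an adjacent transposition. A conceptually cleaner alternative is to give a direct geometric argument: interpret $\tal^{(x)}_{i,i+1}$ as a weighted ``thimble'' joining small loops around $y^{(x)}_i$ and $y^{(x)}_{i+1}$, transport this thimble along $\gamma$ to a curve in $F_{x'}$ joining $y^{(x')}_{\sigma(i)}$ and $y^{(x')}_{\sigma(i+1)}$, and decompose the result as a sum of standard adjacent thimbles $\al^{(x')}_{j,j+1}$ for $j=m(i,i+1),\dots,M(i,i+1)-1$. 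In this picture the factor $\prod_{k>i,\sigma(k)\le j}s_k$ acquires a direct geometric meaning: it is the product of the $s_k$ over those lines $\ell_k$ lying to the right of the original thimble in the $x$-ordering but to the left of position $j+1$ in the $x'$-ordering, i.e.\ exactly the lines that the transported thimble must be ``carried across'' in order to land in the $j$-th adjacent segment.
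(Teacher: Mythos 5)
Your overall strategy --- realize the transport as a braid acting on $\pi_1(F_x)$ by half-twists and descend to the twisted $H_1$ via Fox calculus --- is the same as the paper's, but the way you organize the induction leaves a gap that you yourself flag and do not close. You propose to compute the transport matrix of a single adjacent transposition in the basis $\{\al_{j,j+1}\}$ and then multiply such matrices, inducting on the length of $\sigma$; as you admit, the coefficient $\prod_{k>i,\,\sigma(k)\le j}s_k$ and the index set $\{k>i:\sigma(k)\le j\}$ are not visibly compatible with composition, so this inductive step is precisely the content of the theorem and cannot be deferred. The ``thimble'' alternative has the same status: the geometric reading of the coefficient is plausible, but it is offered as a picture rather than a proof.

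The paper closes exactly this gap by running the induction one level up, on the $\pi_1$ generators $\al^{(x)}_i$ rather than on the $H_1$ basis, where composing half-twists is tractable because the answer is a conjugation with a closed-form conjugator: by induction on the number of points of $\pi(S)$ separating $x$ from $x'$ one shows
$$\gamma_*(\al^{(x)}_i)=P\,\al^{(x')}_{\sigma(i)}\,P^{-1},\qquad P=\prod_{\substack{j<\sigma(i)\\ \sigma^{-1}(j)>i}}\al^{(x')}_j \quad\text{(product in increasing order),}$$
and only then applies Fox calculus, once, to $\gamma_*([\al^{(x)}_i,\al^{(x)}_{i+1}])$; the factor $\prod_{k>i,\,\sigma(k)\le j}s_k$ then falls out in a single pass as $\varphi$ evaluated on the partial products of $P$. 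If you want to keep your reduction to adjacent transpositions (which is legitimate --- the block half-twist at a multiple point does factor as a product of adjacent half-twists, though you should specify the order in which the perturbed double points are traversed so that the braids agree), the repair is the same: track the conjugator $P$ at the group level, where post-composition by one more adjacent transposition modifies $P$ in an explicitly checkable way, and pass to the basis $\al_{j,j+1}$ only at the very end.
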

\noindent\emph{Proof.} 
The proof is a standard computation and it is obtained by the following steps.

First, remark that any generator in (\ref{generat0}) is obtained from Fox calculus from the commutator 
$[\al_i,\al_{i+1}]$     as
\begin{equation}\label{foxcalculus}\tilde{\al}_{i,i+1} = \sum_{j=1}^n \varphi(\frac{\partial}{\partial \al_j} [\al_i,\al_{i+1}])\  \al_j\end{equation}
where $\varphi:\Z[\pi_1(F_0)]\to \Z[s_1^{\pm 1},\dots,s_n^{\pm1}]$ is the valuation homomorphism taking $\al_i$ to $s_i.$ 

Next, we remark that  while the path $\gamma$ turns around the projection of some singularity of a half-circle, the corresponding points in the vertical line make a half-twist, giving rise to a "local permutation" which takes a sequence of consecutive numbers to the opposite sequence (i.e.: $k,\dots,k+h$ goes to $k+h,\dots,k$). 

By induction on the number of points in $\pi(S)$ separating $x$ from $x'$ we easily see that  a generator  $\al^{(x)}_i$ is taken by $\gamma$ to $P\al^{(x')}_{\sigma(i)}P^{-1},$
where $P$ is the product (in increasing order) of the $\al^{(x')}_j$ such that $j<\sigma(i)$ and $\sigma^{-1}(j)>i.$ Then we apply Fox calculus as in (\ref{foxcalculus}) to the transform $\gamma_*([\al^{(x)}_i,\al^{(x)}_{i+1}])$ and we conclude.       \qed
\bigskip

The local monodromy around one point $p\in\pi(S)$ is obtained by taking a point $x\in\R$ very close to $p$ and transporting  the fiber of $\pi''$ starting from $x$  around a circle centered in $p$. If $x'\in\R$ is a point symmetric of $x$ with respect to $p,$ the local monodromy \ $\mu_x$ is the automorphism of $H_1(F_x;\L_x)$ which is  obtained by composing $\tau(x',x)\circ\tau(x,x').$ The "local permutation" $\sigma_{x,x'}$ is determined by a partition $\Delta_1,\Delta_2,\dots,\Delta_h$ of the set $\{1,\dots,n\},$ where each $\Delta$ is composed of consecutive numbers $a,a+1,\dots,a+r$ which are the indices (in the $x$-ordering) of the lines which intersect into a singular point $P$ in the vertical line $\C_p$ (so the multiplicity of $P$ as a singular point of $\A$ equals $|\Delta|+1$). 
 \bigskip

\begin{teo}\label{teo:local}{\emph{ [local monodromy]} }With the previous notations, the local monodromy around $p$ is given by

\begin{equation}\label{localmonodromy} \mu_x(\al^{(x)}_{i,i+1})\ =\ \end{equation} 

\begin{enumerate}
\item[]  $$=\ (\prod_{j\in\Delta} s_j) \  \al^{(x)}_{i,i+1} \hskip1.5cm  
  \mbox{if $i, i+1$ belong to the same block $\Delta$ of $\sigma_{x,x'}$;} $$ 
  
\item[] $$=\ \al^{(x)}_{i,i+1}\ +\ \sum_{k=a}^{i-1}(1-s_a\dots s_k)\al^{(x)}_{k,k+1} \ +\  
\sum_{k=i+2}^{b}s_{i+1}\dots s_{k-1}(1-s_k\dots s_b)\al^{(x)}_{k-1,k}  $$
 if $i$ is the last element of the block $\{a,a+1,\dots,i\}$ and $i+1$ is the first element 
 of the next block $\{i+1,i+2,\dots,b\}.$  
\end{enumerate}

%

\end{teo}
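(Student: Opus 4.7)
The plan is to apply the parallel-transport formula (\ref{transport}) twice and compose, since by definition $\mu_x=\tau(x',x)\circ\tau(x,x')$. The two half-circle paths produce the \emph{same} local-permutation prescription — reverse the internal order of each block $\Delta$ of the partition — so $\sigma_{x,x'}$ and $\sigma_{x',x}$ are involutions that compose to the identity on indices. This guarantees that the result is expressed in the $x$-basis $\{\al^{(x)}_{k,k+1}\}$. I would also note once and for all that the parameter attached to the $k$-th line in the $x'$-ordering is $s_k^{(x')}=s_{\sigma_{x,x'}(k)}^{(x)}$, since $s_k$ is intrinsic to a line and not to its position; this is needed when applying (\ref{transport}) the second time.

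First I would handle case (1), with $i,i+1$ in a single block $\Delta=\{a,\dots,a+r\}$. Here $\sigma_{x,x'}$ sends $(i,i+1)$ to a pair of consecutive values in reversed order, so (\ref{transport}) collapses to a single term: sign $-1$ times $\prod_{k=i+1}^{a+r}s_k$. The second application of (\ref{transport}) is again of case (1) type and gives sign $-1$ times the complementary product $\prod_{k=a}^{i}s_k$ (after the parameter re-indexing above). The two signs multiply to $+1$ and the two products combine to $\prod_{j\in\Delta}s_j$, as desired.

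For case (2), with $i$ last in $\{a,\dots,i\}$ and $i+1$ first in $\{i+1,\dots,b\}$, I would first expand the initial transport into $\sum_{j=a}^{b-1}C_j\,\al^{(x')}_{j,j+1}$ with $C_j=1$ for $j\le i$ and $C_j=\prod_{k=b+i+1-j}^{b}s_k$ for $j\ge i+1$, noting that indices outside $\{a,\dots,b\}$ do not contribute because $\sigma_{x,x'}$ sends them outside. Applying (\ref{transport}) to each $\al^{(x')}_{j,j+1}$: the terms with $j\ne i$ are of case (1) type and each produce a single generator $\al^{(x)}_{k',k'+1}$, while the boundary term $j=i$ is itself of case (2) type and contributes a whole sum spanning both blocks. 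Collecting coefficients, $\al^{(x)}_{i,i+1}$ receives $+1$ from $j=i$ only; each $\al^{(x)}_{k,k+1}$ with $k\in\{a,\dots,i-1\}$ gets $+1$ from $j=i$ and $-s_a\cdots s_k$ from $j=a+i-k-1$, summing to $1-s_a\cdots s_k$; and symmetrically, each $\al^{(x)}_{k-1,k}$ with $k\in\{i+2,\dots,b\}$ gets $s_{i+1}\cdots s_{k-1}$ from $j=i$ and $-s_{i+1}\cdots s_b$ from $j=b+i+1-k$, summing to $s_{i+1}\cdots s_{k-1}(1-s_k\cdots s_b)$. These are exactly the claimed coefficients.

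The hard part will be the bookkeeping in case (2): correctly re-indexing $s_k^{(x')}$ under block reversal, verifying that blocks outside $\{a,\dots,i\}\cup\{i+1,\dots,b\}$ never enter either product appearing in (\ref{transport}), and checking that for each target generator the contributions from $j=i$ and from one other matching $j$ assemble into the compact binomial forms $1-s_a\cdots s_k$ and $s_{i+1}\cdots s_{k-1}(1-s_k\cdots s_b)$. Once this index gymnastics is laid out cleanly, the whole argument is a routine two-step expansion of the parallel-transport formula.
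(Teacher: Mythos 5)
Your proposal is correct and takes exactly the paper's route: the paper's entire proof is the observation that the formula follows from (\ref{transport}) together with $\mu_x=\tau(x',x)\circ\tau(x,x')$, and your two-step expansion (including the block-reversal bookkeeping and the re-indexing of the parameters in the $x'$-ordering) is precisely the computation the paper leaves implicit. The coefficients you assemble in both cases match the stated formulas.
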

\begin{proof} The proof directly follows  from (\ref{transport}) and from $\mu_x(\al^{(x)}_{i,i+1})\ = \tau(x',x)\circ\tau(x,x').$ \end{proof}
\bigskip

As an immediate consequence of (\ref{localmonodromy}) we have 
\bigskip

\begin{cor} \label{cor:charpol}Let $p\in\pi(S)$ and  let $P_1,\dots,P_h$ be the singular points of $\A$ in the vertical line 
$\C_p,$ having multiplicity $m_1,\dots,m_h$ respectively ($m_1+\dots +m_h=n+h$). Then the local monodromy $\mu_x$ has characteristic polynomial
\begin{equation}\label{charpol} 
p_x(\lambda)= (\lambda-1)^{h-1} \prod_{i=1}^h\ (\lambda-\prod_{P_i\in\ell} s_{\ell})^{m_i-2}\ \end{equation}
and it is diagonalizable if all eigenvalues $\prod_{P_i\in\ell} s_{\ell}$ are different from $1.$
Here we use $s_{\ell}$ to indicate the parameter corresponding to the horizontal line $\ell.$
\end{cor}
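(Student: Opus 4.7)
The plan is to read off the corollary directly from the matrix form of $\mu_x$ dictated by Theorem \ref{teo:local}. First I would split the $n-1$ generators $\al^{(x)}_{i,i+1}$ into two classes according to the partition $\Delta_1,\dots,\Delta_h$: call $\al^{(x)}_{i,i+1}$ \emph{internal} when $i$ and $i+1$ lie in the same block, and \emph{transitional} when $i$ is the last element of one block and $i+1$ is the first element of the next. A simple count gives $\sum_j (m_j-2)=n-h$ internal generators and $h-1$ transitional ones, matching the dimension $n-1$.

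Next I would read off the action of $\mu_x$ in the ordered basis (internal first, then transitional). The first case of Theorem \ref{teo:local} says that every internal $\al^{(x)}_{i,i+1}$ lying in the block $\Delta_j$ is an eigenvector with eigenvalue $\prod_{k\in\Delta_j} s_k=\prod_{P_j\in\ell}s_\ell$. The second case says that $\mu_x$ sends each transitional generator to itself plus a linear combination of generators $\al^{(x)}_{k,k+1}$ all of whose indices $k,k+1$ sit inside either $\{a,\dots,i\}$ or $\{i+1,\dots,b\}$ — in other words, a linear combination of \emph{internal} generators. Hence in this ordered basis the matrix of $\mu_x$ has the upper-triangular block form
\[
M=\begin{pmatrix} D & B \\ 0 & I_{h-1}\end{pmatrix},
\]
where $D$ is diagonal with entry $\prod_{P_j\in\ell}s_\ell$ repeated $m_j-2$ times for each $j$, and $I_{h-1}$ is the identity on the transitional part. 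Formula (\ref{charpol}) follows at once from $\det(\lambda I-M)=\det(\lambda I-D)\cdot(\lambda-1)^{h-1}$.

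For the diagonalizability statement, assuming all eigenvalues $\lambda_j:=\prod_{P_j\in\ell}s_\ell$ differ from $1$, I would exhibit for each transitional index $i$ an eigenvector of the form $v=\al^{(x)}_{i,i+1}+\sum c_{k,k+1}\al^{(x)}_{k,k+1}$ summed over internal $(k,k+1)$. Imposing $\mu_x(v)=v$ reduces to the scalar equations $c_{k,k+1}(\lambda_{j(k)}-1)=-\beta_{k,k+1}$, where $\beta_{k,k+1}$ is the coefficient of $\al^{(x)}_{k,k+1}$ appearing in $\mu_x(\al^{(x)}_{i,i+1})$ from Theorem \ref{teo:local}. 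Because $\lambda_j\neq 1$ for every $j$, each such equation is uniquely solvable, so one gets $h-1$ linearly independent eigenvectors for the eigenvalue $1$ (linear independence is clear from the leading terms $\al^{(x)}_{i,i+1}$). Together with the internal eigenvectors for the $\lambda_j$'s this exhausts the dimension $n-1$, so $\mu_x$ is diagonalizable.

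The only subtle point — and the one I would expect to be the main obstacle — is making sure that every off-diagonal entry that Theorem \ref{teo:local} produces for a transitional generator really lands inside the internal block, so that the matrix is genuinely upper triangular and no hidden coupling between distinct transitional generators appears. Once one checks from the explicit expression in (\ref{localmonodromy}) that the two sums run over $k\in\{a,\dots,i-1\}$ and $k-1\in\{i+1,\dots,b-1\}$ respectively, all of which are internal to a single block, both the characteristic polynomial computation and the diagonalizability argument go through cleanly.
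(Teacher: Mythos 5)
Your proof is correct and follows exactly the route the paper intends: the paper states the corollary as an immediate consequence of the local monodromy formula (\ref{localmonodromy}) and gives no further argument, and your block-triangular decomposition into internal and transitional generators is precisely the verification being left to the reader. The key check you flag --- that the two sums in the second case of Theorem \ref{teo:local} involve only generators internal to the blocks $\{a,\dots,i\}$ and $\{i+1,\dots,b\}$ --- does hold, since $k\le i-1$ forces $k,k+1\in\{a,\dots,i\}$ and $k\ge i+2$ forces $k-1,k\in\{i+1,\dots,b\}$, so both the characteristic polynomial and the diagonalizability argument go through as you describe.
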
\qed
\bigskip

Now we consider the global monodromy $\mu:\pi_1(B,x_0)\to Aut(H_1(F_0;\L_s)).$  For each point $p\in\pi(S)$ we denote by $x_p, x'_p\in\R$ two points close to $p$ and lying in opposite sides with respect to $p$ in $\R;$ assume that $x_p$ is that of the two points lying in the segment  $[p,x_0]\subset\R.$ 

The easiest way to compute monodromy is to take generators $\delta_p=\gamma(x_0,x'_p)\gamma(x'_p,x_0)$ for $\pi_1(B,x_0);$ such generator circles around all the projections of the singular points $p'\in[p,x_0]$ counterclockwise. By using the parallel transport (\ref{transport}) we get
\bigskip

\begin{teo}{\emph{[global monodromy (1)]} } The global monodromy $\mu$ is determined by
$$ \mu([\delta_p]) (\al_{i,i+1})=\ \sum_{j=1}^{n-1} \mu_j^i\ \al_{j,j+1} $$ 
where

\begin{equation}\label{globalmonodromy1} 
\mu^i_j=\ \sigma(i,i+1)\sum_{\substack{[k,k+1]\subset[\sigma(i),\sigma(i+1)] \\ [j,j+i]\subset [\sigma^{-1}(k),\sigma^{-1}(k+1)]}}\ \sigma^{-1}(k,k+1)\ \prod_{\substack{h\geq i+1 \\ \sigma(h) \leq k}} s_h \prod_{\substack{l\geq k+1\\ \sigma^{-1}(l)\leq j}} s_{\sigma^{-1}(l)}\end{equation}
\bigskip

\noindent  where $[a,b]$ is meant to be the segment connecting the two points $a$ and $b$ in the real axis (of $\C_{x'_p}$ and $\C_{x_0}$). 
\end{teo}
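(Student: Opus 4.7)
\bigskip

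\textbf{Proof proposal.} The plan is to decompose the loop $\delta_p$ into the concatenation of the two paths that define it and then apply the parallel transport formula (\ref{transport}) twice. Concretely, $\mu([\delta_p])$ acts on $H_1(F_{x_0};\L_s)$ as the composition
\[
\mu([\delta_p])\ =\ \tau(x'_p,x_0)\circ\tau(x_0,x'_p),
\]
both factors being of the type computed in the parallel transport theorem. So the first step is to apply (\ref{transport}) to $\al^{(x_0)}_{i,i+1}$, obtaining
\[
\tau(x_0,x'_p)\bigl(\al^{(x_0)}_{i,i+1}\bigr)\ =\ \sigma(i,i+1)\sum_{k=m(i,i+1)}^{M(i,i+1)-1}\Bigl(\prod_{\substack{h>i\\ \sigma(h)\leq k}} s_h\Bigr)\ \al^{(x'_p)}_{k,k+1},
\]
where $\sigma=\sigma(x_0,x'_p)$. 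The condition $m(i,i+1)\leq k\leq M(i,i+1)-1$ is exactly the condition $[k,k+1]\subset[\sigma(i),\sigma(i+1)]$ that appears as the outer summation range in (\ref{globalmonodromy1}).

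The second step is to apply $\tau(x'_p,x_0)$ to each generator $\al^{(x'_p)}_{k,k+1}$ in the above sum. Because reversing the travel direction replaces the permutation $\sigma$ by $\sigma^{-1}$, (\ref{transport}) gives
\[
\tau(x'_p,x_0)\bigl(\al^{(x'_p)}_{k,k+1}\bigr)\ =\ \sigma^{-1}(k,k+1)\sum_{j}\Bigl(\prod_{\substack{l\geq k+1\\ \sigma^{-1}(l)\leq j}} s_{\sigma^{-1}(l)}\Bigr)\ \al^{(x_0)}_{j,j+1},
\]
where the index $j$ runs through $m(\sigma^{-1}(k),\sigma^{-1}(k+1))\leq j\leq M(\sigma^{-1}(k),\sigma^{-1}(k+1))-1$, i.e.\ through the condition $[j,j+1]\subset[\sigma^{-1}(k),\sigma^{-1}(k+1)]$ which is the inner constraint of (\ref{globalmonodromy1}). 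Substituting this into the previous expression and exchanging the order of summation gives the stated formula for $\mu^i_j$.

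The main (mild) obstacle is bookkeeping: making sure the two sign factors $\sigma(i,i+1)$ and $\sigma^{-1}(k,k+1)$ appear correctly, that the product of the $s_h$'s coming from the forward transport and the product of the $s_{\sigma^{-1}(l)}$'s coming from the backward transport do not interfere with one another (they depend on disjoint sets of indices, since the first involves $\sigma$ and the condition $\sigma(h)\le k$, while the second involves $\sigma^{-1}$ and the condition $\sigma^{-1}(l)\le j$), and that reading off the permutation of the backward transport is indeed $\sigma^{-1}$ because of the orientation convention on the paths $\gamma$ (both leave the real axis on the left while traveling from source to target, so the concatenation encircles the relevant projections counterclockwise as required for $\delta_p$). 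Once these compatibilities are in place, the formula in (\ref{globalmonodromy1}) emerges by direct substitution, and no further computation is needed.
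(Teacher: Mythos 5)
Your proposal is correct and follows exactly the route the paper intends: the paper offers no separate proof beyond the phrase ``by using the parallel transport (\ref{transport})'', and your decomposition $\mu([\delta_p])=\tau(x'_p,x_0)\circ\tau(x_0,x'_p)$ with the observation that the return trip uses the permutation $\sigma^{-1}$ and the parameters $s_{\sigma^{-1}(l)}$ is precisely the intended computation, yielding (\ref{globalmonodromy1}) after exchanging the order of summation. The only cosmetic point is that the inner constraint in the displayed formula should read $[j,j+1]$ (as you write it) rather than $[j,j+i]$, which is evidently a typo in the statement.
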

\bigskip

It is convenient to take elementary generators of $\pi_1(B,x_0),$ associated to  $p\!\!:$\ the path $\gamma_p$  is composed in sequence  by 
$$\gamma_p=\gamma(x_0,x_p)\gamma(x_p,x'_p)\gamma(x'_p,x_p)\gamma(x_p,x_0)^{-1}.$$
Clearly such paths give a well-ordered set of elementary generators of $\pi_1(B,x_0),$ so the global monodromy is determined by their images. 
\bigskip
  
\begin{teo}{\emph{[global monodromy (2)]}} The global monodromy $\mu$ is determined by the maps 
\begin{equation}\label{globalmonodromy}\mu([\gamma_p]) = \tau(x_0,x_p)^{-1}\mu_{x_p}\tau(x_0,x_p)\end{equation}
where $\tau(x_0,x_p)$ is the parallel transport in  (\ref{transport}) and $\mu_{x_p}$ is the local monodromy in (\ref{localmonodromy}).
\end{teo}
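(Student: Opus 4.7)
The statement is essentially a functoriality/conjugation statement for parallel transport in a fiber bundle, so the plan is to deduce it directly from (\ref{transport}) and from the definition of $\mu_{x_p}$ given just before Theorem \ref{teo:local}.

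First, I would recall that the global monodromy representation $\mu:\pi_1(B,x_0)\to \mathrm{Aut}(H_1(F_0;\L_s))$ is induced by the parallel transport of the fiber bundle (\ref{bundle}); equivalently, for a loop $\gamma$ at $x_0$ obtained as a concatenation of paths $\gamma_1*\gamma_2*\cdots *\gamma_k$, one has
$$\mu([\gamma])\ =\ \tau(\gamma_k)\circ\cdots\circ\tau(\gamma_2)\circ\tau(\gamma_1),$$
and for the reversed path $\tau(\gamma^{-1})=\tau(\gamma)^{-1}$. This is standard functoriality of parallel transport and does not require any input beyond the fact that $\pi''$ is a locally trivial fibration on $B$.

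Next, I would apply this directly to the decomposition
$$\gamma_p\ =\ \gamma(x_0,x_p)\ *\ \gamma(x_p,x'_p)\ *\ \gamma(x'_p,x_p)\ *\ \gamma(x_p,x_0)^{-1}$$
that is given in the statement. Reading off the transports in order and composing,
$$\mu([\gamma_p])\ =\ \tau(x_0,x_p)^{-1}\circ\tau(x'_p,x_p)\circ\tau(x_p,x'_p)\circ\tau(x_0,x_p),$$
and the inner composition $\tau(x'_p,x_p)\circ\tau(x_p,x'_p)$ is exactly the definition of the local monodromy $\mu_{x_p}$ around $p$ given just before Theorem \ref{teo:local}. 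Substituting yields (\ref{globalmonodromy}).

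The only genuinely non-cosmetic step is to make sure that the short pieces $\gamma(x_p,x'_p)$ and $\gamma(x'_p,x_p)$ used here are the ones that implicitly appear in the definition of $\mu_{x_p}$ (i.e.\ two symmetric half-circles around $p$, one in the upper half-plane and one in the lower), and that their concatenation is homotopic (rel.\ $\{x_p,x'_p\}$ in $B$) to a full counterclockwise loop around $p$. This follows because no other singular projection lies in a small disk around $p$, so the upper and lower half-circles together fill out the elementary loop around $p$; hence $\gamma_p$ is homotopic in $B$ to the elementary loop based at $x_0$ encircling only $p$, which is the generator of $\pi_1(B,x_0)$ associated with $p$ in the usual ordered generating set. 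The main obstacle, if any, is just this bookkeeping of orientations and of the upper/lower half-plane conventions used in (\ref{transport}); once this is verified, the theorem reduces to the one-line computation above.
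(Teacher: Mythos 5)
Your proposal is correct and is exactly the argument the paper intends: the paper gives no written proof (the theorem is followed immediately by a \qed), since the statement is just functoriality of parallel transport applied to the stated decomposition of $\gamma_p$, with the inner piece $\tau(x'_p,x_p)\circ\tau(x_p,x'_p)$ being the definition of $\mu_{x_p}$. Your bookkeeping of the half-circle conventions is the right (and only) point to check; note only that the paper's last factor $\gamma(x_p,x_0)^{-1}$ should be read as the reverse of $\gamma(x_0,x_p)$ (as you implicitly do), which is what makes its transport equal to $\tau(x_0,x_p)^{-1}$ rather than to the transport along the path on the opposite side of the real axis.
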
\qed
\bigskip

\noindent It is also possible to write explicit formulas for (\ref{globalmonodromy}): formulas (\ref{transport}) and (\ref{localmonodromy}) make possible to compute the images of the  generators $\al_{i,i+1};$ we do not report such formulas here because we don't need their explicit form now. Notice also that (\ref{globalmonodromy}) and (\ref{globalmonodromy1}) are related by 
$$\mu([\gamma_{p_i}])=\mu([\delta_{p_{i-1}}])^{-1}\mu([\delta_{p_{i}}])$$
where the $p_i$ are ordered according to increasing distance from $x_0.$
\bigskip

We now come back to the computation of  $H_0(B;H_1(F_0;\L_s))$ in (\ref{decompo}). 
The space $B$ deformation retracts onto a wedge of $1$-spheres, actually onto the union $\cup_{p\in\pi(S)}\ \gamma_p,$ where the unique $0$-cell is $x_0.$ 
By standard methods which compute local system homology we obtain
\bigskip

\begin{teo} \label{fundam} The vector space $H_0(B;H_1(F_0;\L_s))$ is the cokernel of the map 
\begin{equation}\label{conucleo} \partial_1:\oplus_{p\in\pi(S)} H_1(F_0;\L_s) [\gamma_p] \to  H_1(F_0;\L_s) [x_0]   \end{equation}
taking 
\begin{equation}\label{delta1}\partial_1([\gamma_p]) = t_p\mu([\gamma_p])-Id\end{equation}
\medskip

\noindent where we set here $t_p$ as the parameter associated to the vertical line $\C_p\in \A.$ 
\end{teo}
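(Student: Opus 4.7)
The plan is to compute $H_0(B; H_1(F_0; \L_s))$ via a CW model of $B$ and then to identify the appropriate monodromy representation on the coefficients.

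Since $B = \C \setminus \pi(\St)$ is the complement of finitely many points of $\C$, it deformation retracts onto the wedge $\bigcup_{p \in \pi(\St)} \gamma_p$ of the elementary loops, producing a CW structure with one $0$-cell $x_0$ and one $1$-cell for each $p \in \pi(\St)$, and no cells in higher dimension. For any local coefficient system $V$ on $B$, viewed as a $\pi_1(B, x_0)$-module via a representation $\rho$, the associated twisted cellular chain complex therefore reduces to
$$
\bigoplus_{p \in \pi(\St)} V[\gamma_p] \xrightarrow{\partial_1} V[x_0],
$$
with the restriction of $\partial_1$ to the $p$-th summand equal to $\rho(\gamma_p) - \operatorname{Id}$. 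In particular $H_0(B; V) = \operatorname{coker}(\partial_1)$, which immediately produces (\ref{conucleo}) once the representation $\rho$ acting on $V = H_1(F_0; \L_s)$ in the decomposition (\ref{decompo}) is correctly identified.

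To pin down $\rho$, I would use the horizontal section $\sigma : B \to \MA$, $\sigma(x) = (x, y_0)$, which is globally defined because $\Im(y_0)$ was taken so large that $(x, y_0) \notin \bigcup_{\ell \in \A} \ell$ for every $x \in B$. The lift $\tilde{\gamma}_p := \sigma \circ \gamma_p$ is a loop in $\MA$ based at $P_0$ whose only non-trivial linking with the lines of $\A$ is a single positive encirclement of the vertical line $\ell'_p = \pi^{-1}(p)$; since the fibre coordinate is constant along the lift, no horizontal line is linked. Consequently the monodromy of $\L_{s,t}$ along $\tilde{\gamma}_p$ is the scalar $t_p$. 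On the other hand, the purely geometric action of $\gamma_p$ on $H_1(F_0; \L_s)$, obtained by transporting cycles fibrewise along $\gamma_p$ using the bundle structure of (\ref{bundle}), is the monodromy $\mu([\gamma_p])$ computed in (\ref{globalmonodromy}). Combining these two contributions yields $\rho(\gamma_p) = t_p\, \mu([\gamma_p])$, and substituting into the boundary formula above gives (\ref{delta1}).

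The main obstacle is the clean separation of $\rho(\gamma_p)$ into the scalar $t_p$ and the bundle monodromy $\mu([\gamma_p])$: one must verify that the fibrewise transport recorded by (\ref{transport}) really agrees with parallel transport along the section $\sigma$, so that the residual contribution of the full local system $\L_{s,t}$ is precisely the winding of $\tilde{\gamma}_p$ around $\ell'_p$, hence the factor $t_p$. Once this bookkeeping is settled, the statement is a routine consequence of the cellular computation of $H_0(B; V)$.
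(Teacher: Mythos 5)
Your proof is correct and follows exactly the route the paper intends: the paper offers no written proof (the theorem is preceded only by ``by standard methods which compute local system homology'' and closed with a \emph{qed} box), and your cellular computation on the wedge $\bigcup_p \gamma_p$ together with the identification of the coefficient action as $t_p\,\mu([\gamma_p])$ via the horizontal section at height $y_0$ is precisely the standard argument being invoked. The one point you flag as needing care --- that $\mu([\gamma_p])$, built from the $s$-only transport formulas, accounts for everything except the winding of the lifted loop around the vertical line $\ell'_p$, which contributes the extra scalar $t_p$ --- is indeed the only nontrivial bookkeeping, and your resolution of it is sound.
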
\qed
\bigskip

\noindent It follows that $\partial_1([\gamma_p])$ has rank lower than $n-1$ iff $t_p$ coincides with the inverse of an eigenvalue of $\mu([\gamma_p]).$ Therefore either $t_p=1$ or \ $t_p\prod_{P_i\in\ell} s_{\ell}=1,$ \ for some singular point $P_i\in\C_p$ of order at least $3$ (see (\ref{charpol})). 

We obtain the following corollary. \bigskip

\begin{teo}\label{restriction} The global components of the characteristic variety of $\A$ are contained in 
\begin{equation}\label{mainideal} W(\A)=\bigcap_{p\in\pi(S)}\prod_{\substack{P\in \C_p \\ m(P)\geq 3}} \{ \prod_{\ell:P\in \ell} \rho_{\ell} - 1 \}\ \subset (\C^*)^{n+m}
\end{equation}
where we write here $\rho_{\ell}$ as the parameter associated to the line $\ell$ (in our previous notations it is of the shape $s_i$ for horizontal lines and $t_j$ for vertical lines).
\end{teo}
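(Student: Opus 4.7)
The plan is to chase the identifications already set up in this section. A point $\rho\in(\C^*)^{n+m}$ lies in $V(\A)$ iff $H_1(\MA;\L_\rho)\neq 0$; on a global component the generic point has all $s_i$ and all $t_p$ different from $1$, so $\L_s$ is nontrivial and the first lemma identifies $H_1(\MA;\L_\rho)$ with $H_0(B;H_1(F_0;\L_s))$. By Theorem~\ref{fundam}, this group is the cokernel of
$$\partial_1\;=\;\bigoplus_{p\in\pi(S)}\bigl(t_p\,\mu([\gamma_p])-\Id\bigr)\colon\bigoplus_{p}H_1(F_0;\L_s)\;\longrightarrow\;H_1(F_0;\L_s),$$
so the condition $\rho\in V(\A)$ is equivalent to the total image of $\partial_1$ being a proper subspace of $H_1(F_0;\L_s)$.

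The key observation is that the image of a direct sum of maps into a common target is the sum of the individual images; hence a proper total image forces every individual summand to have proper image. Equivalently, for each $p\in\pi(S)$ the endomorphism $t_p\,\mu([\gamma_p])-\Id$ must be non-invertible, i.e.\ $t_p^{-1}$ must be an eigenvalue of $\mu([\gamma_p])$. Since $\mu([\gamma_p])$ is conjugate to the local monodromy $\mu_{x_p}$ via the formula $\mu([\gamma_p])=\tau(x_0,x_p)^{-1}\mu_{x_p}\tau(x_0,x_p)$, they share the same characteristic polynomial, which by Corollary~\ref{cor:charpol} has roots $1$ together with $\prod_{\ell\text{ horizontal},\,P\in\ell}s_\ell$ for each singular $P\in\C_p$ with $m(P)\geq 3$.

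On a global component $t_p$ is generically different from $1$, so $t_p^{-1}\neq 1$ and the only remaining possibility at each $p$ is that $t_p\cdot\prod_{\ell\text{ horizontal},\,P\in\ell}s_\ell=1$ for some $P\in\C_p$ with $m(P)\geq 3$. Since the vertical line $\C_p$ itself passes through every such $P$ and contributes exactly the factor $t_p$, this equation is precisely $\prod_{\ell\colon P\in\ell}\rho_\ell=1$. Intersecting the union of these loci over all $p\in\pi(S)$ yields exactly $W(\A)$, giving the containment. The one subtle point to flag is that ``proper image of every summand'' is merely necessary, not sufficient, for the cokernel to be nonzero; but since the theorem asserts only a containment, necessity suffices, and closedness of $W(\A)$ extends the conclusion from the generic locus of the global component (where all $s_i,t_p\neq 1$) to the whole component.
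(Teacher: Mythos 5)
Your proof is correct and follows essentially the same route as the paper: the paper's (very terse) argument is precisely that a nonzero cokernel of $\partial_1$ forces each block $t_p\mu([\gamma_p])-\Id$ to be singular, then reads off the eigenvalues from Corollary \ref{cor:charpol} via conjugacy to the local monodromy, discards the eigenvalue $1$ on global components, and absorbs $t_p$ into the product $\prod_{\ell\ni P}\rho_\ell$. Your explicit flagging of the necessary-versus-sufficient point and of the passage from the generic locus to the whole component by closedness only makes explicit what the paper leaves implicit.
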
\qed
\bigskip

As another  consequence we get the following description of the characteristic variety of $\A.$
\bigskip

\begin{teo}\label{charmono}
The characteristic variety $V(\A)$ coincides with the set of values $(s,t)\in(\C^*)^{n+m}$ such
that the transpose of the monodromy operators $\mu([\gamma_p])(s),\ p\in\pi(S),$ have a common eigenvector, relative to eigenvalues $t_p^{-1},\ p\in\pi(S).$ 
\end{teo}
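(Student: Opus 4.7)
The plan is to combine Theorem \ref{fundam} with a standard dualisation. Assume first that $\L_s$ is nontrivial, so that by the decomposition lemma (\ref{decompo}) we have $H_1(\MA;\L_{s,t})\cong H_0(B;H_1(F_0;\L_s))$. Theorem \ref{fundam} then identifies this with the cokernel of the map
\begin{equation*}
\partial_1\colon \bigoplus_{p\in\pi(\St)} H_1(F_0;\L_s) \longrightarrow H_1(F_0;\L_s),
\end{equation*}
whose restriction to the summand indexed by $p$ is $t_p\,\mu([\gamma_p])-\Id$. Consequently $(s,t)\in V(\A)$ precisely when $\partial_1$ fails to be surjective.

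Next I would pass to the dual side. Non-surjectivity of $\partial_1$ amounts to the existence of a nonzero linear functional $\phi\in H_1(F_0;\L_s)^{*}$ that vanishes on the image of every block $t_p\,\mu([\gamma_p])-\Id$. Dualising each block, this is equivalent to
\begin{equation*}
(t_p\,\mu([\gamma_p])-\Id)^{T}\phi=0 \quad\text{for every } p\in\pi(\St),
\end{equation*}
i.e.\ $\mu([\gamma_p])^{T}\phi=t_p^{-1}\phi$ for all such $p$. Hence $\phi$ is a common eigenvector of the transposed monodromy operators relative to the eigenvalues $t_p^{-1}$, which is the conclusion of Theorem \ref{charmono}. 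In the converse direction, any such $\phi$ produces a nonzero element of the cokernel of $\partial_1$ and hence a nonzero class in $H_1(\MA;\L_{s,t})$.

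The substantive content, namely the identification of $H_1(\MA;\L_{s,t})$ with the cokernel of $\partial_1$, is already packaged by Theorem \ref{fundam}; what remains is only the routine linear-algebra reformulation of non-surjectivity in terms of the dual common-eigenspace. The one genuine caveat is the locus where $\L_s$ is trivial (all $s_i=1$), which lies on coordinate tori excluded by the choice of generators (\ref{generat}); on this exceptional set the extra summand $H_1(B;H_0(F_0;\L_s))$ of (\ref{decompo}) must be treated separately, but since the authors are interested in global components of $V(\A)$ off those tori this issue does not affect the statement. The only step that requires a bit of care is therefore verifying that the dualisation is compatible with the block structure of $\partial_1$, which reduces to the identity $\ker(\partial_1^{T})=(\im\partial_1)^{\perp}$ applied to the direct-sum decomposition.
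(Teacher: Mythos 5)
Your proposal is correct and follows exactly the route the paper intends: the theorem is stated as an immediate consequence of Theorem \ref{fundam}, and the paper's (unwritten) argument is precisely your dualisation, as the remark following Theorem \ref{fundam} already reformulates rank drop of $\partial_1([\gamma_p])$ in terms of $t_p$ being the inverse of an eigenvalue of $\mu([\gamma_p])$. Your caveat about the trivial-$\L_s$ locus is also consistent with the paper's standing restriction to global components.
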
     \qed

\section{Some computations}
\noindent We consider here the arrangement $\mathcal R(2n)$ which is projectively given by taking the lines spanned by the edges of a regular $n$-polygon, together with all its $n$ diagonals (see figures \ref{picture1},  \ref{picture3} for the case $n=6,7$ respectively). 
Notice that there is an $n$-tuple point $P$ which is the intersection of the diagonals. For odd $n,$ on each diagonal we have, besides $P,$ one double point at the intersection with the middle point of an  edge, and $\frac{n-1}{2}$ triple points. For even $n,$ $n=2k,$ we have $k$ diagonals passing through two opposite vertices of the $n$-gon, each of them containing $k$ triple ponts; other $k$ diagonals passing through the middle points of two opposite sides of the $n$-gon, each of them containing two double points and $k-1$ triple points. Of course, the projection from $P$ makes the complement $\MA$ a fiber bundle. We take one of the diagonals to infinity, so the projection $\pi:\C^2\to\C$ describes $\MA$ as a fiber bundle over $\C\setminus\{n-1\ points\}$ with fiber $\C\setminus\{n\ points\}$ (see figures \ref{picture2}, \ref{picture4}).

Let $\ell'_1=\{x=p_1\},\dots,\ell'_{n-1}=\{x=p_{n-1}\}$ \ ($p_i\in\R$)\   be the vertical lines, ordered according to $p_1>\dots>\p_{n-1}.$  In the notations of the previous part, let $x_0\in\R$ a basepoint in the $x$-coordinate: we choose $x_0>\!\!>p_1.$  Let also $\ell_1,\dots,\ell_n$ be the horizontal lines, ordered according to increasing intersections with the real axis of $\C_0.$  Fix also points $x_i=p_{i}+\epsilon, \ x'_i= p_i-\epsilon, \ i=1,\dots, n-1,$ \ $0<\epsilon<\!\!<1.$    

We trivially have (see fig. \ref{picture2},
 fig. \ref{picture4}):
\bigskip

\begin{minipage}[t]{\linewidth \fboxsep \fboxrule}
\hspace*{3em}\raisebox{35em}

\centering
{\begin{tikzpicture}[scale=0.95]
\node[label={[xshift=0cm,yshift=0cm]11: $$}] (w1) at (1.02,1.77) {$\bullet$};
\node[label={[xshift=0cm,yshift=0cm]12: $$}] (w2) at (-1.04,1.76) {$\bullet$};
\node[label={[xshift=0cm,yshift=0cm]12: $$}] (w3) at (-2.06,-0.03) {$\bullet$};
\node[label={[xshift=0cm,yshift=0cm]12: $$}] (w4) at (-1.02,-1.81) {$\bullet$};
\node[label={[xshift=0cm,yshift=0cm]12: $$}] (w5) at (1.04,-1.8) {$\bullet$};
\node[label={[xshift=0cm,yshift=0cm]12: $$}] (w6) at (2.06,-0.01) {$\bullet$};

\node[label={[xshift=0cm,yshift=0cm]13: $$}] (z3) at (3.1,-1.79) {$$};
\node[label={[xshift=0cm,yshift=0cm]14: $$}] (z4) at (3.08,1.78) {$$};
\node[label={[xshift=0cm,yshift=0cm]15: $$}] (z5) at (-0.02,3.55) {$$};
\node[label={[xshift=0cm,yshift=0cm]16: $$}] (z6) at (-3.1,1.75) {$$};
\node[label={[xshift=0cm,yshift=0cm]17: $$}] (z7) at (-3.08,-1.82) {$$};
\node[label={[xshift=0cm,yshift=0cm]18: $$}] (z8) at (0.02,-3.59) {$$};

\foreach \source/\target in {z5/z3, z6/z4, z7/z5, z8/z6,z3/z7,z8/z4}
\draw[line width=1,shorten <=-2cm, shorten >=-2cm] (\source)--(\target);

\foreach \source/\target in {z6/z3, z7/z4, z8/z5}
\draw[line width=1,shorten <=-2cm, shorten >=-2cm] (\source)--(\target);

\foreach \source/\target in {w4/w1, w5/w2, w6/w3}
\draw[line width=1,shorten <=-3cm, shorten >=-3cm] (\source)--(\target);

\end{tikzpicture}}
\vskip2.5cm

 \centering
 
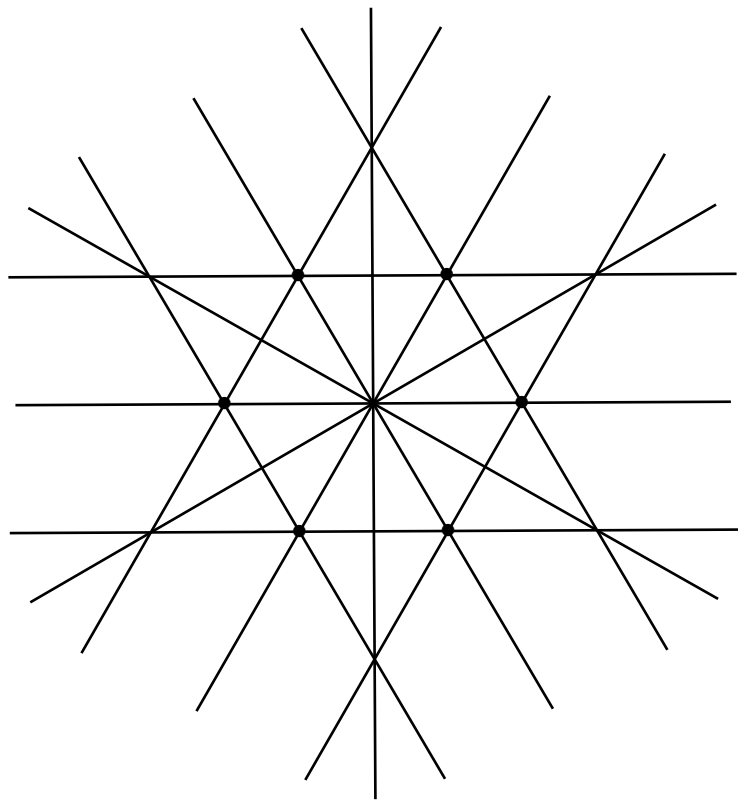
\captionof{figure}{The arrangement $\mathcal R(12)$ (projective picture)}
\label{picture1}
\end{minipage}

\bigskip

\begin{minipage}[t]{\linewidth\fboxsep\fboxrule}
\hspace*{3em}\raisebox{40em}

\centering
{\begin{tikzpicture}[scale=1]

\coordinate[label=150:$$] (a) at (8,-3.99);
\coordinate[label=100:$$] (b) at (8,-2);
\coordinate[label=50:$$] (c) at (8,2);
\coordinate[label=40:$$] (g) at (8,4);
\coordinate[label=20:$$] (d) at (5.67,0);
\coordinate[label=20:$$] (f) at (5.67,-2);
\coordinate[label=20:$$] (e) at (5.67,2);
\coordinate[label=20:$$] (i) at (4.51,1);
\coordinate[label=20:$$] (j) at (4.51,-1);
\coordinate[label=20:$$] (z) at (3.34,2);
\coordinate[label=20:$$] (r) at (3.34,-2);
\coordinate[label=20:$$] (p) at (3.34,-0.01);
\coordinate[label=20:$$] (w) at (0.99,2);
\coordinate[label=20:$$] (t) at (1.01,-2);
\coordinate[label=20:$$] (u) at (1.02,-3.99);
\coordinate[label=20:$$] (q1) at (8,-6.48);
\coordinate[label=20:$$] (q2) at (5.67,-6.48);
\coordinate[label=20:$$] (q3) at (4.51,-6.48);
\coordinate[label=20:$$] (q4) at (3.34,-6.48);
\coordinate[label=20:$$] (q5) at (1.04,-6.48);

\node[label={[xshift=0.1cm,yshift=-0.2cm]100:$p_1$}] (x1) at (8,-6.48) {$\bullet$};

\node[label={[xshift=-0.9cm,yshift=-0.18cm]35: $p_2$}] (x2) at (5.67,-6.48) {$\bullet$};
 \node[label={[xshift=0.2cm,yshift=0.8cm]600: $p_3$}] (x4) at (4.51,-6.48){$\bullet$};
\node[label={[xshift=-0.9cm,yshift=0.65cm]700:$p_4$}] (x5) at (3.34,-6.48){$\bullet$};
\node[label={[xshift=-0.7cm,yshift=0cm]800:$p_5$}] (x6) at (1.04,-6.48){$\bullet$};

\draw[line width=2,shorten <=-1.5cm, shorten >=-2cm] (q1)--(g) node[ pos=-0.17]{$\ell'_1$};
\draw[line width=2,shorten <=-2cm, shorten >=-1.5cm] (c)--(w)node[right, pos=-0.28]{$\ell_4$};
\draw[line width=2,shorten <=-2cm, shorten >=-7cm] (b)--(f) node[right, pos=-0.82]{$\ell_3$};
\draw[line width=2,shorten <=-1.5cm, shorten >=-3.8cm] (q2)--(e)node[pos=-0.21]{$\ell'_2$};
\draw[line width=2,shorten <=-3cm, shorten >=-5cm] (c)--(d)--(r) node[right,pos=-1.95]{$\ell_5$};
\draw[line width=2,shorten <=-3cm, shorten >=-4.5cm] (b)--(d)--(i)--(z)node[right,pos=-4.85]{$\ell_2$};
\draw[line width=2,shorten <=-3cm, shorten >=-3cm] (a)--(w)node[right,pos=-0.32]{$\ell_1$};
\draw[line width=2,shorten <=-3cm,, shorten >=-3cm] (g)--(t)node[right,pos=-0.32]{$\ell_6$};
\draw[line width=2,shorten <=-1.5cm,, shorten >=-10.1cm](q3)--+($(d)-(f)$) node[pos=-0.9]{$\ell'_3$};
\draw[line width=2,shorten <=-1.5cm,, shorten >=-10.1cm](q4)--+($(d)-(f)$)node[pos=-0.9]{$\ell'_4$};
\draw[line width=2,shorten <=-1.5cm,, shorten >=-10.1cm](q5)--+($(d)-(f)$)node[pos=-0.9]{$\ell'_5$};
\draw[line width=1,shorten <=-2.9cm,, shorten >=-7.2cm](q1)--+($(z)-(e)$)node[pos=-1.3]{$x$};

 \end{tikzpicture}}
 \captionof{figure}{The arrangement $\mathcal R(12)$ (affine picture) projected onto the 
$x$-axis}
\label{picture2}
\end{minipage}

\bigskip

\begin{minipage}[t]{\linewidth \fboxsep \fboxrule}
\hspace*{3em}\raisebox{35em}

\centering
{\begin{tikzpicture}[scale=0.55]
\node[label={[xshift=0cm,yshift=0cm]13: $$}] (z3) at (6.3,7.89) {$$};
\node[label={[xshift=0cm,yshift=0cm]14: $$}] (z4) at (8.1,0) {$$};
\node[label={[xshift=0cm,yshift=0cm]15: $$}] (z5) at (-8.3,7.89) {$$};
\node[label={[xshift=0cm,yshift=0cm]16: $$}] (z6) at (3.05,-6.33) {$$};
\node[label={[xshift=0cm,yshift=0cm]17: $$}] (z7) at (-10.1,0) {$$};
\node[label={[xshift=0cm,yshift=0cm]18: $$}] (z8) at (-1,11.41) {$$};
\node[label={[xshift=0cm,yshift=0cm]19: $$}] (z9) at (-5.05,-6.33) {$$};

\node[label={[xshift=0cm,yshift=0cm]20: $$}] (w1) at (2.25,2.82) {$$};
\node[label={[xshift=0cm,yshift=0cm]21: $$}] (w2) at (0.45,5.08) {$$};
\node[label={[xshift=0cm,yshift=0cm]22: $$}] (w3) at (-2.45,5.08) {$$};
\node[label={[xshift=0cm,yshift=0cm]23: $$}] (w4) at (-4.25,2.82) {$$};
\node[label={[xshift=0cm,yshift=0cm]24: $$}] (w5) at (-3.6,0) {$$};
\node[label={[xshift=0cm,yshift=0cm]25: $$}] (w6) at (-1,-1.25) {$$};
\node[label={[xshift=0cm,yshift=0cm]26: $$}] (w7) at (1.6,0) {$$};

\foreach \source/\target in {z7/z4, z9/z3, z6/z8, z4/z5, z3/z7, z8/z9, z5/z6}
\draw[line width=1,shorten <=-2cm, shorten >=-2cm] (\source)--(\target);

\foreach \source/\target in {z7/w1, z9/w2, z6/w3, z4/w4, z3/w5, z8/w6, z5/w7}
\draw[line width=1,shorten <=-2cm, shorten >=-4.5cm] (\source)--(\target);

\end{tikzpicture}}
\vskip2.5cm

 \centering
 \captionof{figure}{The arrangement $\mathcal R(14)$ (projective picture)}
\label{picture3}
\end{minipage}

\bigskip

\begin{minipage}[t]{\linewidth\fboxsep\fboxrule}
\hspace*{3em}\raisebox{40em}

\centering
{\begin{tikzpicture}[scale=1]

\coordinate[label=150:$$] (a) at (8,0);
\coordinate[label=100:$$] (b) at (8,2);
\coordinate[label=50:$$] (c) at (8,-2);
\coordinate[label=40:$$] (g) at (8,3.78);
\coordinate[label=30:$$] (n) at (8,-5.21);
\coordinate[label=20:$$] (d) at (5.4,0.28);
\coordinate[label=20:$$] (e) at (5.4,2);
\coordinate[label=20:$$] (f) at (5.4,-2);
\coordinate[label=20:$$] (l) at (5.4,-2.99);
\coordinate[label=20:$$] (j) at (4.24,-2);
\coordinate[label=20:$$] (z) at (3.31,-2);
\coordinate[label=20:$$] (r) at (2.15,-2);
\coordinate[label=20:$$] (w) at (-0.45,-2);
\coordinate[label=20:$$] (t) at (2.15,2);
\coordinate[label=20:$$] (u) at (-0.45,5.21);
\coordinate[label=20:$$] (q) at (3.31,2);

\node[label={[xshift=0.1cm,yshift=-0.2cm]100:$p_1$}] (x1) at (8,-6.7) {$\bullet$};

\node[label={[xshift=-0.9cm,yshift=-0.18cm]35: $p_2$}] (x2) at (5.4,-6.7) {$\bullet$};
\node[label={[xshift=0.3cm,yshift=-0.2cm]500:$p_3$}] (x3) at (4.24,-6.7){$\bullet$};
\node[label={[xshift=0.2cm,yshift=0.8cm]600: $p_4$}] (x4) at (3.31,-6.7){$\bullet$};
\node[label={[xshift=-0.9cm,yshift=0.65cm]700:$p_5$}] (x5) at (2.15,-6.7){$\bullet$};
\node[label={[xshift=-0.7cm,yshift=0cm]800:$p_6$}] (x6) at (-0.45,-6.7){$\bullet$};

\draw[line width=2,shorten <=-2.5cm, shorten >=-2cm] (n)--(g) node[ pos=-0.31]{$\ell'_1$};
\draw[line width=2,shorten <=-2cm, shorten >=-1.5cm] (c) --(w)node[right, pos=-0.23]{$\ell_4$};
\draw[line width=2,shorten <=-2cm, shorten >=-7cm] (b) --(e) node[right, pos=-0.73]{$\ell_5$};
\draw[line width=2,shorten <=-4.6cm, shorten >=-3.7cm] (l) --(e)node[pos=-0.99]{$\ell'_2$};
\draw[line width=2,shorten <=-3cm, shorten >=-5cm] (b)--(d)--(r) node[right,pos=-1.55]{$\ell_6$};
\draw[line width=2,shorten <=-3cm, shorten >=-2cm] (c)--(d)--(q)--(u)node[right,pos=-1.85]{$\ell_3$};
\draw[line width=2,shorten <=-3cm, shorten >=-9cm] (n)--(l)node[right,pos=-0.9]{$\ell_2$};
\draw[line width=2,shorten <=-3.5cm, shorten >=-5cm] (n)--(t)node[right,pos=-0.37]{$\ell_1$};
\draw[line width=2,shorten <=-3cm,, shorten >=-2cm] (t)--(u);
\draw[line width=2,shorten <=-3cm,, shorten >=-9cm] (g)--(e)node[right,pos=-0.9]{$\ell_7$};
\draw[line width=2,shorten <=-5.6cm,, shorten >=-5.5cm](j)--+($(d)-(f)$) node[pos=-2.6]{$\ell'_3$};
\draw[line width=2,shorten <=-5.6cm,, shorten >=-5.5cm](z)--+($(d)-(f)$)node[pos=-2.6]{$\ell'_4$};
\draw[line width=2,shorten <=-5.6cm,, shorten >=-5.5cm](r)--+($(d)-(f)$)node[pos=-2.6]{$\ell'_5$};
\draw[line width=2,shorten <=-5.6cm,, shorten >=-6.5cm](w)--+($(d)-(f)$)node[pos=-2.6]{$\ell'_6$};
\draw[line width=1,shorten <=-2.9cm,, shorten >=-9.2cm](x1)--+($(z)-(j)$)node[pos=-4.8]{$x$};

 \end{tikzpicture}}
 
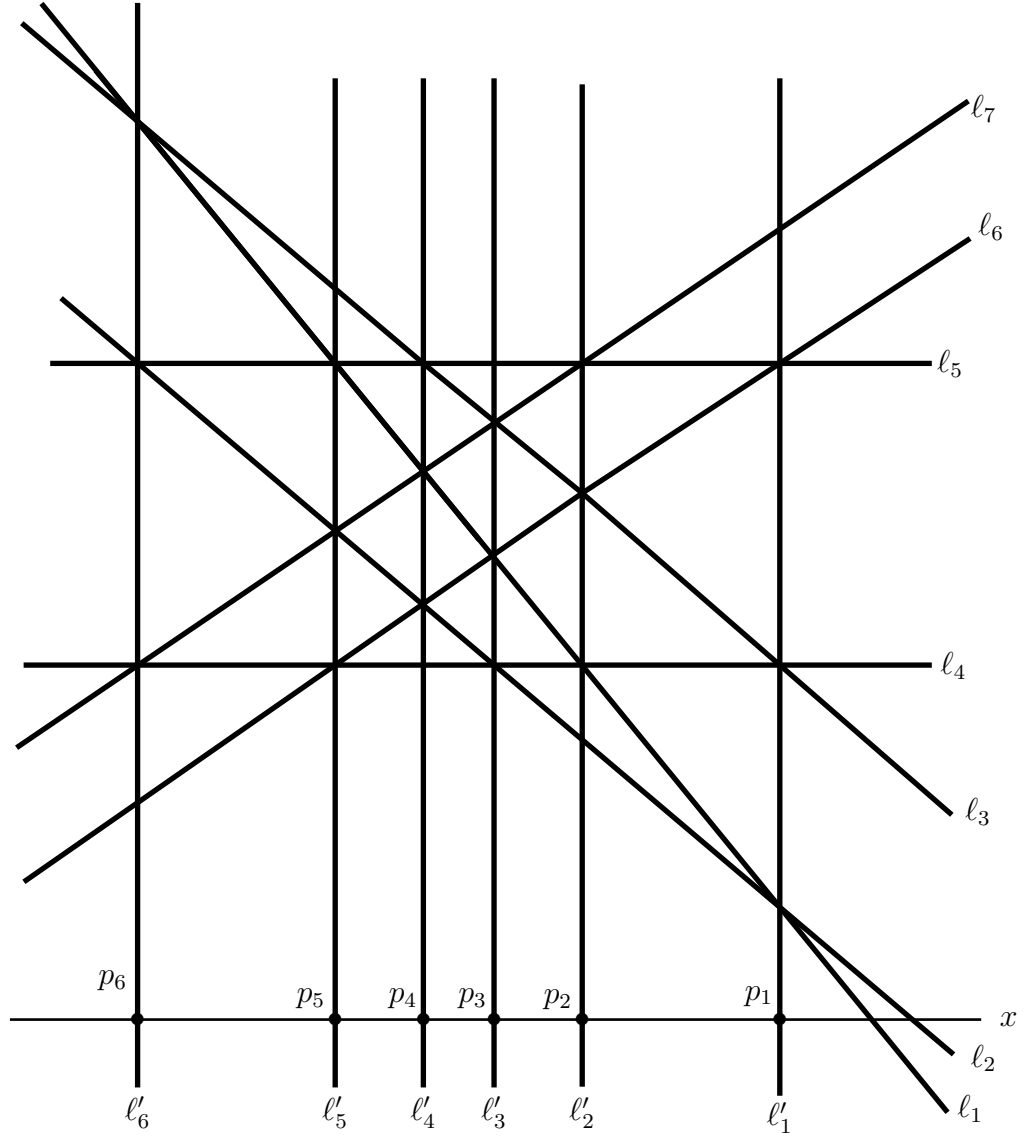
\captionof{figure}{The arrangement $\mathcal R(14)$ (affine picture) projected onto the $x$-axis}
\label{picture4}
\end{minipage}
 
\bigskip

\begin{lem}\label{permutations} 
\begin{enumerate}
\item For odd $n$ the local permutation $\sigma_i=\sigma(x_i,x'_i)$ is given by
$$ \sigma_i=(1,2)(3,4)\dots(n-2,n-1)(n)$$ 
for odd $i$ and by
$$\sigma_i=(1)(2,3)(4,5)\dots(n-1,n)$$
for even $i.$
\item For even $n$ the local permutation $\sigma_i=\sigma(x_i,x'_i)$ is given by
$$ \sigma_i=(1)(2,3)\dots(n-2,n-1)(n)$$ 
for odd $i$ and by
$$\sigma_i=(1,2)(3,4)\dots(n-1,n)$$
for even $i.$
\end{enumerate}
\end{lem}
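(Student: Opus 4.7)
The plan is to translate the lemma into a combinatorial statement about which pairs of edges of the $n$-polygon share a common point on each of its diagonals. From the construction of $\sigma(x_i,x'_i)$ and the preliminary discussion before Theorem \ref{teo:local}, the local permutation $\sigma_i$ is determined by the partition of $\{1,\dots,n\}$ whose blocks are the indices of the horizontal lines meeting in a common singular point of $\mathcal R(2n)$ on the vertical line $\ell'_i$, with the local permutation acting on each block by reversal. After the projective change of coordinates used in this section, $\ell'_i$ is the image of one of the diagonals $D_i$ of the $n$-polygon, and the horizontal lines $\ell_1,\dots,\ell_n$ are the images of the $n$ edges. So I need to identify every subset of edges meeting in a common point on $D_i$ and to locate these subsets in the $y$-ordering just to the right of $p_i$.

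The main ingredient is the reflectional symmetry of the regular $n$-gon across $D_i$. Under this reflection, each edge is either fixed (possible only if the edge has its midpoint on $D_i$) or paired with a distinct edge. A pair of edges identified by the reflection either (i) meets in a common point on $D_i$, producing a triple point of $\mathcal R(2n)$, or (ii) consists of two edges parallel across $D_i$ and not meeting at any finite point. The case analysis by parity of $n$ then goes as follows. For odd $n$, the diagonal $D_i$ connects a vertex of the polygon to the midpoint of the opposite edge; the opposite edge is the unique edge fixed by the reflection, and the remaining $n-1$ edges form $(n-1)/2$ symmetric pairs, none of which is a parallel pair, so each pair produces a triple point on $D_i$. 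For even $n=2k$, two types of diagonal occur and alternate as $i$ increases by one: a vertex-to-vertex diagonal has no fixed edge and yields $k$ pairs all meeting on $D_i$, whereas a midpoint-to-midpoint diagonal has two fixed edges (those whose midpoints lie on $D_i$), the remaining $n-2$ edges forming $k-1$ pairs all meeting on $D_i$.

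The last step is to pin down the $y$-ordering positions at $x_i$ of the fixed edges and of the paired edges. Direct inspection of the geometry, illustrated by Figures \ref{picture2} and \ref{picture4}, shows that symmetric pairs always occupy consecutive positions in the $y$-ordering, and fixed edges (when present) occupy the extreme positions $1$ and/or $n$. This yields the four cyclic patterns in the statement. The alternation with the parity of $i$ reflects the fact that the rotation of the polygon by $\pi/n$ sends $D_i$ to $D_{i+1}$: for odd $n$ this moves the fixed edge between positions $1$ and $n$, while for even $n$ it exchanges the vertex-to-vertex and midpoint-to-midpoint diagonal types. The most delicate point, and the main obstacle, is matching the indexing of the horizontal lines inherited from the $y$-ordering at $x_i$ with the pairing of edges under the reflection across $D_i$; this is best done by an explicit verification in the base cases $n=5$ and $n=6$ and then propagated to all $n$ via the compatible cyclic action of the rotational symmetry group on diagonals and edges.
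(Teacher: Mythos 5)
The paper gives no argument for this lemma at all (it is asserted as ``trivial'' from the affine pictures), so your reflection-symmetry derivation is filling a real gap, and its skeleton is right: the blocks of $\sigma_i$ are the sets of edges concurrent at a singular point of the vertical line, these sets are determined by the reflection of the $n$-gon across the diagonal $D_i$, and the block structure plus the positions of the singleton blocks give the stated permutations. However, two steps fail as written. First, the parallel pairs of your case (ii) \emph{do} occur, and your case analysis for even $n$ wrongly asserts that every symmetric pair meets $D_i$. For $n\equiv 2\pmod 4$ each vertex-to-vertex diagonal is parallel to one opposite pair of edges (e.g.\ the long diagonals of the regular hexagon), and for $n\equiv 0\pmod 4$ each midpoint-to-midpoint diagonal is; such a pair meets $D_i$ only at infinity in the Euclidean picture, and taking your dichotomy at face value would split that block into two singletons and produce, say, $(1,2)(3)(4)(5,6)$ instead of $(1,2)(3,4)(5,6)$ for a long diagonal of the hexagon. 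The lemma is saved only because the paper works in the chart where one diagonal is the line at infinity: the intersection at infinity of the parallel pair with $D_i$ becomes an ordinary affine triple point on $\ell'_i$, so the pair really is a block of size two there. This must be said explicitly; it is also exactly why the triple-point counts $k$ and $k-1$ quoted at the start of the section come out right.

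Second, the concluding step --- verify $n=5,6$ and ``propagate to all $n$ via the cyclic action'' --- is not a valid induction on $n$: the rotational symmetry of $\mathcal R(2n)$ is by $2\pi/n$, it relates only the diagonals $D_i$ and $D_{i+2}$ of one fixed arrangement (rotation by $\pi/n$ is not a symmetry of the $n$-gon), and it gives no information about a different $n$. It can legitimately reduce the verification, for each fixed $n$, to one odd and one even $i$, but the placement of the blocks still needs an argument uniform in $n$. The clean way is metric: every singular point of $D_i$ other than the center is an intersection of two edge-lines, hence lies at distance at least $1$ from the center, whereas a double point is the tangency of the fixed edge with the inscribed circle, at distance $\cos(\pi/n)<1$; so on each ray of $D_i$ from the center the double point (when present) is strictly the closest singular point. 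Since the projective change of chart sends the center to the vertical point at infinity of $\ell'_i$, ``closest to the center'' becomes ``extreme in the $y$-ordering,'' which puts the singleton blocks at positions $1$ and/or $n$ and the pair-blocks, automatically consecutive, in between. The alternation in $i$ then follows because the linear order $p_1>\dots>p_{n-1}$ agrees with the angular order of the diagonals, and angularly consecutive diagonals alternate orientation (odd $n$) or type (even $n$). With these two repairs your approach goes through.
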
\qed

Let $\omega_n$ be a primitive $n$-th root of the unity.  We use results from the previous section to find explicit points in the characteristic variety.\bigskip

\begin{teo}\label{main1} The $n$ points $P_{n,k}\in (\C^*)^{2n-1},$ $k=0,\dots,n-1,$ whose coordinates are given by the $(s,t)$ with $s_j=\omega_n^k,\ j=1,\dots, n,$  $t_j=(\omega_n^k)^{n-2},\ j=1,\dots,n-1,$ belong to the characteristic variety.
\end{teo}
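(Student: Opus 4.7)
The plan is to apply Theorem \ref{charmono}: to show $P_{n,k}\in V(\mathcal R(2n))$ it suffices to produce a non-zero vector $v\in H_1(F_0;\L_s)^{*}$ that is simultaneously a left eigenvector of all the global monodromies $\mu([\gamma_{p_i}])$, $i=1,\dots,n-1$, with eigenvalue $t_{p_i}^{-1}$. The case $k=0$ is trivial. For $k\neq 0$, a direct computation at $P_{n,k}$ gives $t_{p_i}^{-1}=\omega_n^{-k(n-2)}=\omega_n^{2k}$, the same value for every $i$, so we are searching for a single common eigenvalue $\lambda=\omega_n^{2k}$ shared by all the $\mu([\gamma_{p_i}])$.

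The next step is to confirm that each $\mu([\gamma_{p_i}])$ actually has $\omega_n^{2k}$ as an eigenvalue. Under the uniform specialization $s_j=\omega_n^k$, Corollary \ref{cor:charpol} shows that every triple point on $\C_{p_i}$ contributes a factor $(\lambda - s_a s_b)=(\lambda-\omega_n^{2k})$ of multiplicity one to the characteristic polynomial of the local monodromy $\mu_{x_{p_i}}$, while double points ($m_i=2$) contribute nothing. The combinatorics of $\mathcal R(2n)$ recalled at the beginning of the section guarantees at least one triple point on every vertical line for $n\geq 3$, so $\mu_{x_{p_i}}$, and hence its conjugate $\mu([\gamma_{p_i}])=\tau(x_0,x_{p_i})^{-1}\mu_{x_{p_i}}\tau(x_0,x_{p_i})$, admits $\omega_n^{2k}$ as an eigenvalue; note this also matches Theorem \ref{restriction}, which only ensures the inclusion $P_{n,k}\in W(\mathcal A)$.

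The substantive step is to exhibit the common left eigenvector. For each $i$ the left eigenspace of $\mu([\gamma_{p_i}])$ with eigenvalue $\omega_n^{2k}$ pulls back by $\tau(x_0,x_{p_i})^{T}$ to the left eigenspace of the local $\mu_{x_{p_i}}$, which by (\ref{localmonodromy}) can be written in closed form as linear combinations of the dual basis $\{(\al^{(x_{p_i})}_{j,j+1})^{*}\}$ with coefficients polynomial in $\omega_n^k$, supported on the indices neighbouring the triple points of $\C_{p_i}$. The plan is to choose a candidate $v\in H_1(F_0;\L_s)^{*}$, as a $\Z/n$-symmetric linear combination of $\{(\al_{j,j+1})^{*}\}$ with coefficients monomial in $\omega_n^k$ dictated by the rotational symmetry of the polygon, and then to verify the $n-1$ equations $v\cdot \mu([\gamma_{p_i}])=\omega_n^{2k}\,v$ using the explicit parallel transport (\ref{transport}) and local monodromy (\ref{localmonodromy}).

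The main obstacle is this final verification. The parallel transports $\tau(x_0,x_{p_i})$ are compositions of elementary transports whose permutations follow the parity patterns of Lemma \ref{permutations}, so the argument splits into the cases $n$ even and $n$ odd, and within each into subcases according to the parity of $i$. In every case, however, the uniformity $s_j=\omega_n^k$ keeps the conjugated global monodromies combinatorially uniform, and each eigenvector equation reduces to a short polynomial identity in $\omega_n^k$ that follows from $\omega_n^n=1$; this is the step where the specific exponent $k(n-2)$ chosen for the vertical parameters $t_j$ makes all the identities simultaneously consistent, thereby producing the required $v$ and certifying $P_{n,k}\in V(\mathcal R(2n))$.
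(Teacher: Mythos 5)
Your overall strategy is exactly the paper's: reduce via Theorem \ref{charmono} (equivalently, Theorem \ref{fundam}) to exhibiting a common left eigenvector of the operators $\mu([\gamma_p])$ with eigenvalue $t_p^{-1}=\omega^{2}$ (where $\omega=\omega_n^k$), and your preliminary observations --- that all the $t_p^{-1}$ coincide, and that Corollary \ref{cor:charpol} guarantees $\omega^{2}=s_as_b$ is an eigenvalue of each local monodromy because every vertical line carries a triple point --- are correct. The paper makes the same reduction and, using Lemma \ref{permutations}, cuts the verification down to two local monodromies $M_1,M_2$ and the two elementary transports $\tau(x_1,x_2)^{-1}$, $\tau(x_2,x_3)^{-1}$.

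However, your argument stops exactly where the actual work begins: you never exhibit the candidate vector $v$, and that construction is the entire substance of the proof. A common eigenvalue for all the $\mu([\gamma_p])$ is far from sufficient --- the corresponding eigenspaces must actually intersect, and arranging this is precisely what the specific choice $t_j=\omega^{n-2}$ accomplishes. The paper writes the vector down explicitly: $v_n$ has $j$-th entry (from the right) $(-1)^j\frac{\omega^j-1}{\omega-1}=(-1)^j(1+\omega+\dots+\omega^{j-1})$, an alternating partial geometric sum, and then checks by hand that $v_n$ annihilates every nonzero column of $t_iM_i-\Id$ and is sent to $-\omega^{-1}v_n$ by both transport matrices. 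Your proposed ansatz --- a ``$\Z/n$-symmetric'' combination with coefficients \emph{monomial} in $\omega_n^k$ --- does not match this vector, so following your plan literally would fail; and the final verification is not a routine identity following from $\omega^n=1$ alone but depends on the precise shape of $v_n$. In short, the proposal is a correct plan with the key object missing.
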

\begin{proof} The case $k=0$ is trivial since $P_{n,0}=(1,\dots,1)$ ($2n-1$ factors). So assume $k>0.$ Let us consider the row vector

\begin{equation}\label{vector} v_n=[(-1)^{n-1}\frac{\omega^{n-1}-1}{\omega-1},\dots,(-1)^j\frac{\omega^j-1}{\omega-1},\dots,-1]\in\C^{n-1}\end{equation}
where we set here for brevity $\omega=\omega_n^k.$

\noindent We prove that $v_n$ is an eigenvector for the transpose operator \ $^t\partial_1([\gamma_p])$ in (\ref{delta1}) for all $\gamma_p$ when the parameters take the above values. This shows that such parameters lower the rank  of $\partial_1$ in (\ref{conucleo}) so the corresponding point belongs to the characteristic variety.

Notice that by (\ref{permutations}) the local monodromy around $p_i$ coincides  with  the local monodromy $M_1$ around $p_1$  for odd $i$ or with the local monodromy $M_2$ around $p_2$ for even $i.$ Therefore by the expression (\ref{globalmonodromy}) for the monodromy it is sufficient  to prove that $v_n$ is eigenvector for $^t M_1$ and $^t M_2$ with parameters $s_i=\omega,$ with respect to the eigenvalue $\omega^{n-2},$  and both $\tau(x_1,x_2)^{-1}$  and $\tau(x_2,x_3)^{-1}$ take $v_n$ into an eigenvector of $^tM_2,\ ^tM_1$ respectively.

The first assertion is easily proved by looking at formulas (\ref{localmonodromy}). With the given parameters,  there are as many non-zero columns in $t_iM_i-Id$ as the number of triple points. So, for odd $n$ there are $(n-1)/2$ non-zero columns, in even position for $t_1M_1-Id,$ in odd position for $t_2M_2-Id;$ for even $n,$ there are  $(n-2)/2$ non-zero columns in even position for $t_1M_1-Id$ and $n/2$ non-zero columns in odd position for $t_2M_2-Id$ (see fig. \ref{picture2}, \ref{picture4}). The non-zero $j-$th column has entries $\omega^{n-2}-1$ at the $(j,j)$ entry,  preceded (if it is not the first entry of the column) by $\omega^{n-2}(1-\omega),$ and followed (if it is not the last entry of the column)  by $\omega^{n-1}-1;$ all other entries of the column vanish.  Then it is easy to  check that multiplying on the left by $v_n$  gives zero. 

The form of $T=\tau(x_1,x_2)^{-1}$ and $T'=\tau(x_2,x_3)^{-1}$ (with the given parameters) is derived from formula  (\ref{transport}) and from lemma \ref{permutations}.

In case $n$ is odd one obtains: for odd $j$  one has 
\bigskip

\centerline{$T_{jj}=-\omega^{-1}$ \ and  \ $T_{ij}=0$ \  for\  $ i\not=j;$}
\medskip

\noindent for even $j$  
\medskip

\centerline{$T_{j-1,j}=\omega^{-1},$ $T_{jj}=T_{j+1,j}=1,$ and $T_{ij}=0$ for all other $i$'s.} 
\bigskip

The shape of $T'$ is completely analogue by exchanging the role of even and odd $j.$

For $n$ even, the shape is the same exchanging  $T$ and $T'$.

Then one verifies directly that
$$v_n\cdot T\ =\ v_n\cdot T' \ =\ -\omega^{-1} \ v_n$$
and we conclude.
\end{proof}
\bigskip

\begin{rem} \label{action} By symmetry, there is an obvious action of the cyclic group $C_n= $ $<\sigma_n=(0,1,\dots,n-1)>$ onto the characteristic variety. Consider the edges of the $n-$gon  (in the projective picture) cyclically ordered as $\ell_0,\dots,\ell_{n-1}.$  Let us cyclically order also the diagonals $\ell'_0,\dots\ell'_{n-1},$ starting from the diagonal $\ell'_0$ which is orthogonal to $\ell_0.$ Take a new parameter $t_{n-1}$ for the line at infinity, with the condition that $\prod_{i=0}^{n-1} s_i\prod_{j=0}^{n-1} t_j =1.$  Then the action of $\sigma_n$ on some point $(s_0,\dots,s_{n-1},t_0,\dots,t_{n-1})\in(\C^*)^{2n}$ in $V$ is  the point $(s_{\sigma_n(0)},\dots,s_{\sigma_n(n-1)},t_{\sigma_n^2(0)},\dots,t_{\sigma_n^2(n-1)})$ which also belongs to $V.$ Notice that the points $P_{n,k}$ are fixed by this action.

With this numeration the double points are given by the intersection of the lines of indices $(i, (2i)')$ and the triple points are given by the intersections of the lines of indices $(i,j,(i+j)')$ (taking indices $mod\ n$).
\end{rem}
\bigskip

We need a lemma

\begin{lem}\label{zerodim}  Let $n\geq 5$ and let $\omega_n$ be a primitive $n-$th root of $1.$     
\begin{enumerate}
\item Let  $R_s=\C[s_0^{\pm 1},\dots,s_{n-1}^{\pm 1}]$ be the ring of Laurent polynomials in the variables $s_0,\dots s_{n-1}.$ The ideal
$$I\ =\ (s_is_j=s_l s_m:\ i+j\equiv l+m \ (mod\ n),\ i\not=j,\ l\not=m)$$
is one-dimensional with $n$ irreducible components
$$I_h\ =\ \{s_i=(\omega_n)^{hi} s_0: \ i=1,\dots, n-1\},\quad h=0,\dots,n-1.$$
\item Let $R_{s,t}=\C[s_0^{\pm 1},\dots,s_{n-1}^{\pm 1},t_0^{\pm 1},\dots,t_{n-1}^{\pm 1}]$
be the ring of Laurent polynomials in $s_i, t_j,$ $i,j=0,\dots,n-1.$  Let $J$ be the ideal 
$$J\ =\ (t_{i+j}s_i s_j=1\ ,\ \prod_{i=0}^{n-1}s_i\prod_{i=0}^{n-1} t_i=1)$$
where we take all possible pairs $(i,j)$ with $i\not=j,$ and the indices are mod $n.$ Then $J$ is $0$-dimensional 
and defines the $n^2$ points
$$P_{n,h,k}=\ \{s_i=(\omega_n)^{hi+k},  t_i=(s_0s_i)^{-1},\ i=0,\dots,n-1\},\ h, k=0,\dots,n-1.$$   
\end{enumerate}
\end{lem}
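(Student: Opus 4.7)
The plan is to prove part (1) by a direct calculation on the ratios $r_i := s_i/s_0$, and then to derive part (2) by eliminating $t$ using part (1) together with the product relation.

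For part (1) I would introduce $r_i = s_i/s_0$ (so $r_0 = 1$). Specialising the defining binomials to $(l,m)=(0,i+j)$ -- permissible when $i+j\not\equiv 0\pmod n$ -- turns them into the partial multiplicativity relation
\[
r_i\,r_j = r_{i+j} \quad\text{for } i,j\not\equiv 0,\ i\neq j,\ i+j\not\equiv 0 \pmod n.
\]
Iterating $r_k = r_1 r_{k-1}$ for $k=3,\dots,n-1$ yields $r_k = r_1^{k-2}r_2$. To pin down $r_2$, I would use the relation $s_2s_3 = s_1s_4$ in $I$ (valid for every $n\geq 5$, since $2+3=1+4$ and the four indices are distinct): rewriting it as $r_2r_3 = r_1r_4$ and substituting $r_3=r_1r_2$, $r_4=r_1^2r_2$ collapses the identity to $r_2 = r_1^2$, and hence $r_k = r_1^k$ for $k=0,\dots,n-1$. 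Finally I apply the wrap-around relation $s_0s_1 = s_2s_{n-1}$ in $I$ (valid for $n\geq 5$, since $0+1\equiv 2+(n-1)\pmod n$ and the indices $0,1,2,n-1$ are distinct): under the substitutions already made this reads $r_1 = r_2\,r_{n-1} = r_1^{n+1}$, forcing $r_1^n = 1$. Hence $r_1=\omega_n^h$ for some $h\in\{0,\dots,n-1\}$, and $V(I)$ is the disjoint union of the $n$ irreducible one-dimensional curves $I_h$, each parametrised by $s_0\in\C^*$.

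For part (2), comparing two instances of $t_{i+j}s_is_j = 1$ on different pairs with the same sum modulo $n$ immediately yields $s_is_j = s_ls_m$ whenever $i+j\equiv l+m\pmod n$, so all generators of $I$ (viewed inside $R_{s,t}$) lie in $J$. By part (1), at any point of $V(J)$ the $s$-coordinates take the form $s_i = \omega_n^{hi}s_0$ for some $h$. Each instance of $t_k\,s_i\,s_{k-i}=1$ then forces $t_k = (s_0 s_k)^{-1} = \omega_n^{-hk}s_0^{-2}$: for $k\neq 0$ use $(i,k-i)=(0,k)$, while $k=0$ is handled by $(i,k-i)=(1,n-1)$, giving the same value thanks to $s_1s_{n-1}=s_0^2$ on $I_h$. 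Plugging these into the product constraint $\prod_i s_i\,\prod_i t_i = 1$, the $\omega_n$-phases $\omega_n^{hn(n-1)/2}$ and $\omega_n^{-hn(n-1)/2}$ cancel and one is left with $s_0^{-n}=1$, i.e.\ $s_0 = \omega_n^k$ for some $k\in\{0,\dots,n-1\}$. This produces exactly the $n^2$ points $P_{n,h,k}$ of the statement, with every coordinate uniquely determined, so $J$ is zero-dimensional.

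The step I expect to be the main obstacle is the derivation of $r_1^n=1$ in part (1). The partial multiplicativity alone expresses $r_2,\dots,r_{n-1}$ as Laurent monomials in $r_1$ but places no constraint on $r_1^n$, so one must locate a specific binomial whose two sides, evaluated via $r_k=r_1^k$, produce powers of $r_1$ differing by exactly $n$. The relation $s_0s_1=s_2s_{n-1}$ does this uniformly for $n\geq 5$, but verifying that $n\geq 5$ is precisely the threshold at which the four indices involved remain distinct (and that no index collapse spoils the intermediate step $r_2=r_1^2$) requires sweeping a small set of cases.
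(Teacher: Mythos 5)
Your proof is correct and follows essentially the same route as the paper's: both arguments manipulate the binomial generators directly to force $s_i=\omega_n^{hi}s_0$ (the paper via the derived diagonal relations $s_i^2=s_js_k$ and $s_0^n=s_1^n$, you via the ratios $r_i=s_i/s_0$ and the wrap-around relation $s_0s_1=s_2s_{n-1}$), and then both reduce part (2) to part (1) by eliminating the $t_i$ and using the product constraint to get $s_0^{-n}=1$. Your handling of $t_0$ via the pair $(1,n-1)$ and your explicit tracking of where $n\geq 5$ is needed are slightly more detailed than the paper's, but the underlying argument is the same.
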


\begin{proof}
(1) First we prove the relations 
\begin{equation}\label{eq2} s_{i}^2=s_{j}s_{k}\quad  \forall\ i,j,k\ with\ 2i\equiv j+k\ (mod\ n).\end{equation}
By multiplying the two relations
$$s_0s_1=s_2s_{n-1},\ s_0s_{n-1}=s_1s_{n-2}\ (n>3)$$
we deduce $s_0^2=s_2s_{n-2}.$ Therefore  (\ref{eq2}) follows for $n>4.$ 
So we have  \ 
$s_is_j=s_ls_m\ \forall\  i,j,l,m\ with\ i+j\equiv\ l+m\  (mod\ n).$ \  
Then one easily deduces for recurrence the relations
$$s_{a_1}\dots s_{a_k}=s_{b_1}\dots s_{b_k}$$
every time $\sum a_j=\sum b_j \ (mod\ n).$ In particular one has 
$s_0^n=s_i^n$ for all $i=1,\dots,n-1.$ Then $s_1=(\omega_n)^h s_0$ for some $h.$ 
From $s_is_0=s_1s_{i-1}$ it follows by induction that $s_i=(\omega_n)^{hi}s_0$ which proves part (1).

For (2), notice that relations defining $I$ follow from those which define $J,$ so  we have again $s_i=(\omega_n)^{hi} s_0$, $i=0,\dots,n-1,$ for some $h$ in $0,\dots,n-1.$ 

Now using $t_i=(s_0s_i)^{-1}$ the last  relation gives
$\prod s_i \prod(s_0s_i)^{-1}=(s_0)^{-n}=1. $
Therefore $s_0=\omega_n^k$ for some $k$ in $0,\dots,n-1.$ 
 
\end{proof}

In conclusion, we show
\bigskip

\begin{teo}\label{isolato} The $\phi(n)$ points $P_{n,k},\ (n,k)=1,$ of theorem \ref{main1} are isolated points of the characteristic variety $V(\A).$
\end{teo}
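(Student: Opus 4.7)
Plan of proof. By Theorem \ref{charmono}, a point $(s,t)\in(\C^*)^{2n-1}$ lies in $V(\A)$ iff the transposes $^t\mu([\gamma_p])$, $p\in\pi(\St)$, admit a common eigenvector $v$ satisfying $^t\mu([\gamma_p])\,v=t_p^{-1}v$. The plan is to show that the existence of such a $v$ in a small neighborhood $U$ of $P_{n,k}$ forces $(s,t)$ into the zero-dimensional locus cut out by the ideal $J$ of Lemma \ref{zerodim}(2).

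First I would observe that at $P_{n,k}$ itself the explicit common eigenvector $v_n$ of (\ref{vector}) has \emph{every} coordinate nonzero: since $(n,k)=1$, the number $\omega=\omega_n^k$ is a primitive $n$-th root of unity, whence $\omega^j\neq 1$ for $0<j<n$. By continuity, any common eigenvector for a nearby point $(s,t)\in V(\A)$ can be normalized to stay close to $v_n$, and in particular still has every coordinate nonzero. Moreover, for $n\geq 5$ we have $\omega\notin\{\pm 1\}$, so $t_p\approx\omega^{-2}\neq 1$ throughout $U$, and Corollary \ref{cor:charpol} forces $t_p^{-1}$ to equal some triple-point eigenvalue $s_is_j$ with $i+j\equiv p\pmod n$, giving the relation $t_p\,s_is_j=1$.

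The crux of the proof is then to deduce from the all-nonzero shape of $v$ that, on each vertical line $\ell'_p$, \emph{all} triple-point products must coincide: $s_is_j=s_ls_m$ whenever $i+j\equiv l+m\equiv p\pmod n$ ($i\neq j$, $l\neq m$). Indeed, applying (\ref{localmonodromy}) and (\ref{globalmonodromy}), one checks that the eigenvector of $^t\mu([\gamma_p])$ attached to a \emph{simple} triple-point eigenvalue $s_as_{a+1}$ has several vanishing entries, with a support pattern determined by the block $\{a,a+1\}$ and its neighbors in the partition of $\{1,\dots,n\}$ induced by $\ell'_p$. Hence if two triple-point eigenvalues on $\ell'_p$ remained distinct, the $t_p^{-1}$-eigenspace of $^t\mu([\gamma_p])$ would be one-dimensional with a prescribed zero pattern, which $v$ would inherit --- contradicting its all-nonzero shape. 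Collecting these equalities over $p$, together with the relations $t_p\,s_is_j=1$ and the projective normalization $\prod_{i=0}^{n-1}s_i\prod_{i=0}^{n-1}t_i=1$ built into the setup of Remark \ref{action}, one recovers exactly the defining relations of the ideal $J$ of Lemma \ref{zerodim}(2). By that lemma $V(J)$ is finite --- it consists of the $n^2$ points $P_{n,h,k}$, among which $P_{n,k}=P_{n,0,k}$ --- so $V(\A)\cap U$ is finite and $P_{n,k}$ is isolated.

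The main obstacle is the zero-pattern analysis of simple-eigenvalue eigenvectors of $^t\mu([\gamma_p])$: starting from (\ref{localmonodromy}) one must carefully track, for both parities of $p$ (per Lemma \ref{permutations}) and after conjugation by the parallel transport $\tau(x_0,x_p)$, the precise support of these eigenvectors, so as to rule out an all-nonzero common eigenvector whenever the triple-point eigenvalues split. A secondary issue is pinning down the role of the extra relation $\prod_is_i\prod_it_i=1$ in the affine setting: this should follow from the global monodromy compatibility between successive vertical fibers (the identity $v_n\cdot T=-\omega^{-1}v_n$ established in the proof of Theorem \ref{main1}), ensuring that Lemma \ref{zerodim}(2), and not merely Lemma \ref{zerodim}(1), is what applies.
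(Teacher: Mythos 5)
Your proposal follows essentially the same route as the paper: characterize $V(\A)$ via a common eigenvector of the transposed monodromy operators (Theorem \ref{charmono}), note that $v_n$ has all coordinates nonzero exactly when $(n,k)=1$, deduce the triple-point relations $t_p s_i s_j=1$ on every vertical line, and invoke Lemma \ref{zerodim} to conclude that the resulting locus is zero-dimensional. The only notable differences are that the paper reads those relations directly off the explicit rows of $t_1\,{}^t\!M_1-\Id$ (avoiding your ``main obstacle'' of analyzing supports of simple-eigenvalue eigenvectors), and it replaces your continuity argument by the sharper observation that $\mathrm{rk}\,\partial_1(P_{n,k})=n-2$, hence any $n-2$ rows of $\partial_1$ remain independent near $P_{n,k}$, which forces every left-kernel vector at a nearby point of $V(\A)$ to have all coordinates nonzero.
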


\begin{proof} We divide the proof in several steps.

I. We already know from theorem \ref{main1} that $rk(\partial_1(P_{n,k}))<n-1, \ k=0,\dots, n-1.$  We directly check that  \ $rk(\partial_1(P_{n,k}))= n-2$ \ if \  $(n,k)=1.$ 

 In fact, the $(n-1)\times 2(n-1)$-submatrix of $\partial_1$ which corresponds to the local monodromy around the first two points $p_1,\ p_2$ is already of rank $n-2.$ This follows from the explicit description of the matrices $B_i:=t_iM_i-Id,\ i=1,2,$ given in theorem \ref{main1}. By taking a basepoint between the two points $p_1$ and $p_2$ (which amounts to simultaneously conjugate the blocks of the boundary matrix), the two blocks $B_1, \ B_2$ are submatrices of the corresponding boundary $\partial_1.$ Reordering the $n-1$ non-zero columns of $B_1$ and $B_2$ gives a tridiagonal matrix of order $n-1$ which has clearly rank $n-2$. 
 
 Therefore, $rk(\partial_1(P'))\geq n-2$  if  $P'$ is close to $P_{n,k}.$  We have to show that  $rk(\partial_1(P'))=n-1$ if $P'\not =P_{n,k}$ and $P'$ is close to $P_{n,k}.$   
\smallskip

II. It follows from the previous point that the left kernel of $\partial_1$ is of dimension $1$ in the given points, so it is spanned by the vector $v_n$ of theorem \ref{main1}. 

Notice that, under the hypothesis $(n,k)=1,$ all the coordinates of $v_n$ are different from $0.$ That implies that any $n-2$ rows of $\partial_1$ are linearly independent, and this must remain true in a neighborhood of $P_{n,k}.$
\smallskip

III. Recall from theorem \ref{charmono} that the vector $v_n$ is  a common  eigenvector for the transposed monodromy operators. Let us write the explicit form of the first block \  $t_1\ ^t\!M_1-Id$ \ of the boundary operator for $n=6,7$.

We have 
\medskip

$t_1\ ^t\!M_1-Id=$

$$= \begin{bmatrix} t_1-1 & t_1s_2(1-s_3) & 0 & 0 & 0 \\
0 & t_1s_2s_3-1 & 0 & 0 & 0 \\
0 & t_1(1-s_2) & t_1-1 & t_1s_4(1-s_5) & 0 \\
0 & 0 & 0 & t_1s_4s_5-1 & 0 \\
0 & 0 & 0 & t_1(1-s_4) & t_1-1 \end{bmatrix}   \quad \mbox{,for \ $n=6$;}$$

$$= \begin{bmatrix} t_1s_1s_2-1 & 0& 0 & 0 & 0 & 0 \\
t_1(1-s_1) & t_1-1&  t_1s_3(1-s_4) & 0 & 0 & 0 \\
0& 0 & t_1s_3s_4-1 & 0 & 0 & 0 \\
0 & 0 & t_1(1-s_3) & t_1-1 & t_1s_5(1-s_6) & 0 \\
0 & 0 & 0 & 0 & t_1s_5s_6-1 & 0 \\
0 & 0 & 0 & 0&  t_1(1-s_5) & t_1-1 \end{bmatrix}   \quad \mbox{,for \ $n=7$.}$$
\medskip

\noindent Having \ $(t_1\ ^t\!M_1-Id) v_n=0$ \ and all components of $v_n$ different from $0$ clearly gives  
$$t_1s_2s_3=t_1s_4s_5=1 \quad \mbox{for $n=6,$} $$
and   
$$t_1s_1s_2=t_1s_3s_4=t_1s_5s_6=1 \quad \mbox{for $n=7$}.$$ 
These conditions are equivalent to 
$$t_1s_is_j=1$$
each time $\ell'_1,\ \ell_i,\ \ell_j$  form a triple point.  Of course, we have the same conditions for all $n.$
\smallskip

IV.  By symmetry, or changing coordinates by taking as a basepoint one point $x$ close to the vertical line $\ell'_i$, we obtain similar conditions for any vertical line.

Remark also that we can take to infinity any other line passing through the center of the $n$-gon,
obtaining similar conditions for the line $\ell'_n$ with given parameter $t_n.$ In the projective situation, the product of all $s$ and $t$ equal $1.$
\smallskip

V.  It follows that $P_{n,k}$ is contained  in the zero locus of the ideal (with obvious notation)
\begin{equation}\label{ideal} I=(t_{\ell}s_{\ell'}s_{\ell''}=1\ :\  \mbox{each time}\ \ell\cap\ell'\cap\ell''\ \mbox{is a triple point of}\ \arr)\cap (\prod\ s_i\prod t_j=1)\end{equation}
Now notice that any point $P'$ very close to $P_{n,k}$ either does not belong to the characteristic variety or it corresponds to a common eigenvector $v'_n$ still having non vanishing components, so defining the same conditions.  

It follows that in a neighborhood of $P_{n,k}$ the characteristic variety is contained in the zero locus  of (\ref{ideal}).       

By lemma  \ref{zerodim} the ideal  $I$ is zero-dimensional (for $n\geq 5$) so the theorem is proved.
%
\end{proof}
\bigskip

\begin{rmk} \label{nonprimo}
\begin{enumerate}

\item In \cite{toryos}, pages 45--46, the case $n=8$ is considered and the authors claim that the six points $P_{8,k},\ k=1,\dots,7,\ k\not=4,$ are isolated points in the characteristic variety. The argument used in theorem \ref{isolato} works for $(n,k)=1;$ nevertheless, it can be refined in the following way. For $n=8,$ $k=2, 6,$ the eigenvector $v_8$ in (\ref{vector}) has a unique zero component in fourth position. An argument similar to that used in the proof of theorem \ref{isolato} produces a list of equalities among eigenvalues of the monodromy operators, giving rise to an ideal $I'$ smaller than 
the ideal $I$ given in (\ref{ideal}). Making explicit computations, we find that $I'$ is $1$-dimensional. Finally, we find a $1$-dimensional translated component of the characteristic variety which contains the two points $P_{8,2},\ P_{8,6}$ which are actually not isolated.
If we order the lines as in remark  \ref{action}, such $1$-dimensional translated component is given as follows
\medskip

\begin{center}
\begin{equation}\label{translated8}
\begin{array}{lclc}
s_i=x&  \mbox{for even} & i & \\
s_i=-x^{-1}&  \mbox{for odd} & i & \\
t_i=-1 & \mbox{for odd} & i  & \\
t_0=t_4=x^2 & & &\\
t_2=t_6=x^{-2} & & & \\
\end{array}
\end{equation}
\end{center}
$( x\in\C^* )$.

\medskip

Case $n=4,$ which corresponds to  the $B_3$-deleted arrangement considered in \cite{suciu}, can also be included in our method. If we consider any of the points  $P_{n,k},$ $k=1,2,3,$ we obtain the same ideal $I$ in (\ref{ideal}), which is in this case $1$-dimensional.   
 In fact,  the zero locus of $I$ has four irreducible $1$-dimensional components: one of these has the same shape as (\ref{translated8}), being parametrized as 
\begin{equation}\label{old} {}\{(x,-x^{-1},x,-x^{-1},x^2,-1,x^{-2},-1) {}:{} x\in\C^* \} \end{equation}
and it is the translated component appearing in \cite{suciu}.  It contains the two points $P_{4,1}, \ P_{4,3}.$  The other three components  of the zero locus of $I$ do not belong to the characteristic variety. 

\item Some cases where $(n,k)\not=1$ are known.
 For example, in case $n=9, \ k=3,$ the point $P_{9,3}$  is contained into a $2$-dimensional component coming from a multinet (see \cite{falkyuz}). 
\end{enumerate}
\end{rmk}

The argument outlined in rmk \ref{nonprimo} can be extended to any case $n=4m$ to produce a $1$-dimensional translated component, generalizing (\ref{old}), (\ref{translated8}). 

\begin{teo} \label{onedimensional} When $n=4m$ the characteristic variety $V(\A)$ contains a $1$-dimensional translated component parametrized as (the labelings are as in remark \ref{action})
\begin{equation}\label{general1}
\begin{array}{ll}
s_{2i}=x, \  s_{2i+1}=-x^{-1}, & i=0,\dots,2m-1;  \\
t_i=-1,\  \mbox{$i$ odd;}&  \\
 t_{i}=x^2,\ i\equiv0\ (mod \ 4); & \ t_{i}=x^{-2},\ i\equiv 2\ (mod\ 4)  
 \end{array}
 \end{equation}
\end{teo}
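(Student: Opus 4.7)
The strategy is to apply theorem \ref{charmono}: it suffices to exhibit, for each $x\in\C^*$, a common eigenvector $w(x)\in\C^{n-1}$ for all the transposed monodromy operators $\ ^t\mu([\gamma_{p_i}])$ with respective eigenvalues $t_{p_i}^{-1}$, when the parameters are specialized as in (\ref{general1}). The image of the map $x\mapsto(s(x),t(x))$ is then contained in $V(\A)$. Since at $x=1$ this image is $(1,-1,\dots,1,-1;1,-1,\dots,1,-1)\neq(1,\dots,1)$, it does not pass through the identity and is therefore translated.

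Following the strategy of the $n=4,8$ cases treated in remark \ref{nonprimo}, I would look for a common eigenvector $w(x)$ with specific zero coordinates. A key preliminary observation is obtained by inspecting the triple-point condition $t_p s_i s_j=1$ with the parameters (\ref{general1}): it holds whenever $i$ and $j$ have opposite parity (both sides then equal $-1$) and also when $i,j$ have the same parity and $(i+j)/2$ is odd, but it fails when $i,j$ have the same parity and $(i+j)/2$ is even. Hence $w(x)$ must be designed so that, for every vertical line $\ell'_p$, the rows of $\ ^t\mu_{x_p}-t_p^{-1}\mathrm{Id}$ corresponding to the failing triple points are killed by the vanishing of appropriate components of $w(x)$. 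Guided by this analysis, I would construct $w(x)$ explicitly by solving the eigenvector equation on the simplest block (the local monodromy around $p_1$, displayed in the proof of theorem \ref{isolato}), then extend the resulting Laurent-polynomial expression to arbitrary $n=4m$.

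The main verification is computational: using the explicit form of the local monodromy (\ref{localmonodromy}) and of the parallel transport (\ref{transport}), together with lemma \ref{permutations}, one checks that $\ ^t\mu([\gamma_p])w(x)=t_p^{-1}w(x)$ for each $p=p_1,\dots,p_{n-1}$. By the cyclic symmetry of the arrangement (remark \ref{action}), it suffices to verify this identity for one odd-indexed and one even-indexed vertical line; the remaining cases follow by symmetry. The main obstacle is organizing the bookkeeping: tracking the zero pattern of $w(x)$ against the two different local permutation structures of lemma \ref{permutations} and the alternating signs in (\ref{general1}). As a nontrivial cross-check, $w(\pm i)$ should specialize to the eigenvector $v_n$ of theorem \ref{main1} at $\omega=\omega_n^m=i$ (resp.\ $\omega=\omega_n^{3m}=-i$), recovering the points $P_{n,m}$ and $P_{n,3m}$ as members of this 1-dimensional family and thereby confirming that those points are not isolated in $V(\A)$ when $(n,m)>1$.
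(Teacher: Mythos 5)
Your first half—exhibiting a common eigenvector $w(x)$ for the transposed monodromy operators with eigenvalues $t_p^{-1}$ and invoking theorem \ref{charmono}—is essentially the paper's argument for the containment of the curve (\ref{general1}) in $V(\A)$ (the paper phrases it as a rank drop of the boundary matrix, which is the same thing), and your cross-check that $x=\pm i$ recovers $P_{n,m}$ and $P_{n,3m}$ is sound. The genuine gap is that you never prove the curve is a \emph{component} of $V(\A)$. Showing that the image of $x\mapsto(s(x),t(x))$ lies in $V(\A)$ and misses the identity only shows that $V(\A)$ contains a translated curve; a priori that curve could sit inside a $2$-dimensional component, and then the theorem as stated would be false. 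The paper spends the second half of its proof exactly on this maximality: at $P_{n,m}$ the common eigenvector has zeros precisely in the positions divisible by $4$, so nearby points of $V(\A)$ still force all the triple-point equations except those killed by these zeros; this yields an ideal $I'$ (obtained from $I$ of (\ref{ideal}) by deleting, for each even $i$, the equations $t_i s_{i/2+2j}s_{i/2-2j}=1$), one checks $Z(I')$ is $1$-dimensional with (\ref{general1}) as the irreducible component through $P_{n,m}$, and concludes as in theorem \ref{isolato}. Your proposal contains none of this, so it does not establish the statement.

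A secondary point: your preliminary parity analysis of the condition $t_{i+j}s_is_j=1$ is wrong for pairs of odd indices. With the parameters (\ref{general1}), for $i,j$ both even the condition holds iff $i+j\equiv 2\ (mod\ 4)$, but for $i,j$ both odd one gets $t_{i+j}s_is_j=x^{-4}$ when $i+j\equiv 2\ (mod\ 4)$ and $1$ when $i+j\equiv 0\ (mod\ 4)$ — the opposite of what you state. Since this analysis is what dictates the required zero pattern of $w(x)$ (which rows of $\,^t\mu_{x_p}-t_p^{-1}\mathrm{Id}$ must be annihilated by vanishing coordinates), carrying it forward as written would lead you to the wrong candidate eigenvector; the correct pattern, consistent with the paper, is that the zeros sit exactly in the positions that are multiples of $4$.
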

 
\begin{proof} We outline the proof of the theorem. 
First, one shows, by using the explicit form of the boundary as in theorem \ref{main1}, that the points in \ref{general1} lower the rank of the boundary matrix, so the component is contained in $V(\A).$ 

To prove that this is an isolated component, we proceed in a way similar to the proof of theorem~\ref{isolato}.  

Let us consider the point $P_{n,m}$ (or $P_{n,3m})$ which is contained in (\ref{general1}).  The vector $v_n$ of (\ref{vector}) has zero entries exactly in the positions multiple of $4$. The argument of theorem \ref{isolato} produces an ideal $I'$ which is obtained by $I$ by deleting, for each $i$ even in $0,\dots,4m-1, $ the $m-1$ equations  \ $t_{i}s_{\frac{i}{2}+2j}s_{\frac{i}{2}-2j}=1,$\ $j=1\dots,m-1$ (indices are taken $mod\ n$). One sees that the zero locus $Z(I')$ is of dimension $1$   and  (\ref{general1}) is contained in $Z(I')$ as an irreducible component which contains $P_{n,m}.$

Then we conclude that such component is not contained in a higher dimensional one by the same reasoning as that in theorem \ref{isolato}, using that points of the characteristic variety which are close to $P_{n,m}$ impose the same conditions.   

\end{proof}
\bigskip

\begin{rmk}
Analogue computations can be done for other points $P_{n,k},$ where the number and the  positions of the zero entries of the common eigenvector tell us what ideal to take. 
For example, we made some computations finding further isolated points in $V(\A)$ for $n=12.$  Details and  further applications will be provided  later.  

\end{rmk}
      
\vskip2cm

%
%

%
%
%

\end{document}